\numberwithin{equation}{section}
\begin{document}

\theoremstyle{plain}
\newtheorem{Lemma}{Lemma}[section]
\newtheorem{Prop}[Lemma]{Proposition}
\newtheorem{Thm}[Lemma]{Theorem}
\newtheorem{Cor}[Lemma]{Corollary}

\theoremstyle{definition}
\newtheorem{Def}[Lemma]{Definition}
\newtheorem{Rk}[Lemma]{Remark}
\newtheorem{Example}[Lemma]{Example}
\newtheorem{Exercise}[Lemma]{Exercise}

\newcommand{\Natural}{\mbox{${\bf N}$}}
\newcommand{\Integer}{\mbox{${\bf Z}$}}
\newcommand{\Real}{\mbox{${\bf R}$}}
\newcommand{\Complex}{\mbox{${\bf C}$}}

\newcommand{\Eps}{\varepsilon}
\newcommand{\Alpha}{A}
\newcommand{\Beta}{B}
\newcommand{\Epsilon}{E}
\newcommand{\Zeta}{Z}
\newcommand{\Eta}{H}
\newcommand{\Iota}{I}
\newcommand{\Kappa}{K}
\newcommand{\Mu}{M}
\newcommand{\Nu}{N}
\newcommand{\omicron}{o}
\newcommand{\Omicron}{O}
\newcommand{\Rho}{R}
\newcommand{\Tau}{T}
\newcommand{\Chi}{X}

\newcommand{\Sfrac}[2]{\mbox{\small$\frac{#1}{#2}$\normalsize}}
\newcommand{\Half}{\Sfrac{1}{2}}
\newcommand{\Vect}[1]{{\bf #1}}
\newcommand{\Grad}[1]{\nabla #1}
\newcommand{\Gradp}[1]{\nabla' #1}
\newcommand{\Gradx}[1]{\nabla_x #1}
\newcommand{\Gradxp}[1]{\nabla_{x'} #1}
\newcommand{\GradAlphap}[1]{\nabla_{\alpha}' #1}
\newcommand{\Div}[1]{\text{div}\left[#1\right]}
\newcommand{\Divp}[1]{\text{div}'\left[#1\right]}
\newcommand{\Divx}[1]{\text{div}_x \left[#1\right]}
\newcommand{\Divxp}[1]{\text{div}_{x'} \left[#1\right]}
\newcommand{\Curl}[1]{\text{curl}[#1]}
\newcommand{\Laplacian}[1]{\Delta #1}
\newcommand{\Laplacianp}[1]{\Delta' #1}
\newcommand{\Laplacianx}[1]{\Delta_x #1}
\newcommand{\Laplacianxp}[1]{\Delta_{x'} #1}
\newcommand{\Biharmonic}[1]{\Delta^2 #1}
\newcommand{\FT}[1]{{\cal F} \left\{ #1 \right\}}
\newcommand{\FTI}[1]{{\cal F}^{-1} \left\{ #1 \right\}}
\newcommand{\Variation}[2]{\delta_{#2} #1}

\newcommand{\Norm}[2]{\left\|#1\right\|_{#2}}
\newcommand{\LeftNorm}[1]{\left\|#1\right.}
\newcommand{\RightNorm}[2]{\left.#1\right\|_{#2}}
\newcommand{\SupNorm}[1]{\left|#1\right|_{L^{\infty}}}
\newcommand{\HolderNorm}[2]{\left|#1\right|_{C^{#2}}}
\newcommand{\SobNorm}[2]{\left\|#1\right\|_{H^{#2}}}

\newcommand{\InnerProd}[2]{\left\langle#1,#2\right\rangle}
\newcommand{\DotProd}[2]{\left\langle#1,#2\right\rangle}
\newcommand{\Abs}[1]{\left|#1\right|}
\newcommand{\Mod}[1]{\left|#1\right|}
\newcommand{\Angle}[1]{\langle #1 \rangle}
\newcommand{\RealPart}[1]{\text{Re\{}#1\text{\}}}
\newcommand{\ImagPart}[1]{\text{Im\{}#1\text{\}}}
\newcommand{\Null}[1]{\mbox{${\cal N}$}(#1)}
\newcommand{\Ran}[1]{\text{ran}(#1)}
\newcommand{\Ker}[1]{\text{ker}(#1)}
\newcommand{\Dim}[1]{\text{dim}(#1)}
\newcommand{\Rank}[1]{\text{rank}(#1)}
\newcommand{\Det}[1]{\mbox{det} #1}
\newcommand{\Span}[1]{\text{span}(#1)}
\newcommand{\sgn}{\text{sgn}}

\newcommand{\sech}{\mbox{$\mathrm{sech}$}}
\newcommand{\csch}{\mbox{$\mathrm{csch}$}}

\newcommand{\Intersect}{\cap}
\newcommand{\Union}{\cup}

\newcommand{\dftl}[1]{\; d#1}
\newcommand{\dV}{\dftl{V}}
\newcommand{\dS}{\dftl{S}}
\newcommand{\dx}{\dftl{x}}
\newcommand{\dy}{\dftl{y}}
\newcommand{\dz}{\dftl{z}}
\newcommand{\ds}{\dftl{s}}
\newcommand{\dt}{\dftl{t}}
\newcommand{\du}{\dftl{u}}
\newcommand{\dsigma}{\dftl{\sigma}}

\newcommand{\BigOh}[1]{\mathcal{O}(#1)}
\newcommand{\LittleOh}[1]{\mathcal{o}(#1)}

\newcommand{\cA}{\mathcal{A}}
\newcommand{\cB}{\mathcal{B}}
\newcommand{\cC}{\mathcal{C}}
\newcommand{\cD}{\mathcal{D}}
\newcommand{\cE}{\mathcal{E}}
\newcommand{\cF}{\mathcal{F}}
\newcommand{\cG}{\mathcal{G}}
\newcommand{\cH}{\mathcal{H}}
\newcommand{\cI}{\mathcal{I}}
\newcommand{\cJ}{\mathcal{J}}
\newcommand{\cK}{\mathcal{K}}
\newcommand{\cL}{\mathcal{L}}
\newcommand{\cM}{\mathcal{M}}
\newcommand{\cN}{\mathcal{N}}
\newcommand{\cO}{\mathcal{O}}
\newcommand{\cP}{\mathcal{P}}
\newcommand{\cQ}{\mathcal{Q}}
\newcommand{\cR}{\mathcal{R}}
\newcommand{\cS}{\mathcal{S}}
\newcommand{\cT}{\mathcal{T}}
\newcommand{\cU}{\mathcal{U}}
\newcommand{\cV}{\mathcal{V}}
\newcommand{\cW}{\mathcal{W}}
\newcommand{\cX}{\mathcal{X}}
\newcommand{\cY}{\mathcal{Y}}
\newcommand{\cZ}{\mathcal{Z}}

\newcommand{\px}{\partial_x}
\newcommand{\py}{\partial_y}
\newcommand{\pz}{\partial_z}
\newcommand{\pt}{\partial_t}

\newcommand{\sumn}{\sum_{n=0}^{\infty}}
\newcommand{\sumno}{\sum_{n=1}^{\infty}}
\newcommand{\sumk}{\sum_{k=-\infty}^{\infty}}
\newcommand{\sump}{\sum_{p=-\infty}^{\infty}}

\newcommand{\summ}{\sum_{m=0}^{\infty}}
\newcommand{\sumell}{\sum_{\ell=0}^{\infty}}

\newcommand{\be}{\begin{equation}}
\newcommand{\ee}{\end{equation}}
\newcommand{\bes}{\begin{equation*}}
\newcommand{\ees}{\end{equation*}}
\newcommand{\bse}{\begin{subequations}}
\newcommand{\ese}{\end{subequations}}

\newcommand{\Schrodinger}{Schr\"odinger}
\newcommand{\Holder}{H\"older}
\newcommand{\Calderon}{Calder\'{o}n}
\newcommand{\Pade}{Pad\'{e}}

\newcommand{\Question}[1]{\fbox{ {\bf Q: #1} }}
\newcommand{\Corrected}[1]
{\noindent \rule{\linewidth}{.75mm} \\ {\bf CORRECTED UP TO HERE (#1)} \\ \rule{\linewidth}{.75mm}}

\newcommand{\pN}{\partial_N}
\newcommand{\Vu}{V^u}
\newcommand{\Vl}{V^{\ell}}
\newcommand{\tVu}{\tilde{V}^u}
\newcommand{\tVl}{\tilde{V}^{\ell}}
\newcommand{\Huu}{H^{u u}}
\newcommand{\Hul}{H^{u \ell}}
\newcommand{\Hlu}{H^{\ell u}}
\newcommand{\Hll}{H^{\ell \ell}}
\newcommand{\bA}{\mathbf{A}}
\newcommand{\bM}{\mathbf{M}}
\newcommand{\bV}{\mathbf{V}}
\newcommand{\bR}{\mathbf{R}}
\newcommand{\bL}{\mathbf{L}}
\newcommand{\bD}{\mathbf{D}}
\newcommand{\bU}{\mathbf{U}}

\newcommand{\vmmo}{v^{(m-1)}}
\newcommand{\vm}{v^{(m)}}
\newcommand{\xim}{\xi^{(m)}}
\newcommand{\num}{\nu^{(m)}}
\newcommand{\taummo}{\tau_{m-1}}
\newcommand{\taum}{\tau_m}
\newcommand{\sigmam}{\sigma^{(m)}}
\newcommand{\sigmamhat}{\hat{\sigma}^{(m)}}
\newcommand{\epsilonm}{\epsilon_m}
\newcommand{\Nm}{N^{(m)}}
\newcommand{\pNm}{\partial_{N^{(m)}}}
\newcommand{\pNmpo}{\partial_{N^{(m+1)}}}
\newcommand{\am}{a^{(m)}}
\newcommand{\ampo}{a^{(m+1)}}
\newcommand{\gm}{g^{(m)}}
\newcommand{\gmpo}{g^{(m+1)}}
\newcommand{\Am}{A^{(m)}}
\newcommand{\Bm}{B^{(m)}}

\newcommand{\ku}{k_u}
\newcommand{\riu}{n_u}
\newcommand{\gammau}{\gamma_u}

\newcommand{\Vml}{V^{(m),\ell}}
\newcommand{\Vmu}{V^{(m),u}}
\newcommand{\tVml}{\tilde{V}^{(m),\ell}}
\newcommand{\tVmu}{\tilde{V}^{(m),u}}
\newcommand{\chim}{\chi^{(m)}}
\newcommand{\psim}{\psi^{(m)}}

\newcommand{\Vzl}{V^{(0),\ell}}
\newcommand{\VMu}{V^{(M),u}}
\newcommand{\tVzl}{\tilde{V}^{(0),\ell}}
\newcommand{\tVMu}{\tilde{V}^{(M),u}}

\newcommand{\Ruu}{R^{u u}}
\newcommand{\Rul}{R^{u \ell}}
\newcommand{\Rlu}{R^{\ell u}}
\newcommand{\Rll}{R^{\ell \ell}}

\newcommand{\void}[1]{}

\newcommand{\RevOne}[1]{\textcolor{red}{#1}}
\newcommand{\RevTwo}[1]{\textcolor{blue}{#1}}

\newcommand{\hsigmaD}{\hat{\sigma}_{\text{Drude}}}
\newcommand{\DeltaTE}{\Delta^{\text{TE}}}
\newcommand{\DeltaTM}{\Delta^{\text{TM}}}

\newcommand{\Tianyu}[1]{\textcolor{green}{#1}}

\newcommand{\hsigmaBGK}{\hat{\sigma}_{\text{BGK}}}
\newcommand{\hsigmaloc}{\hat{\sigma}_{\text{loc}}}
\newcommand{\hsigmanloc}{\hat{\sigma}_{\text{nloc}}}
\newcommand{\Aloc}{A_{\text{loc}}}
\newcommand{\Anloc}{A_{\text{nloc}}}
\newcommand{\Bloc}{B_{\text{loc}}}
\newcommand{\Bnloc}{B_{\text{nloc}}}

%
%

\title[HOS Simulation of 2D Materials]
  {High--Order Spectral Simulation of Dispersive
  Two--Dimensional Materials}
\author[D.P.~Nicholls et.~al.]
  {David P.\ Nicholls\affil{1}\comma\corrauth
  and Tianyu Zhu\affil{1}}
\address{\affilnum{1}
  Department of Mathematics, Statistics, and Computer Science,
  University of Illinois at Chicago, Chicago, IL 60607}
\email{\texttt{davidn@uic.edu} (D.P.~Nicholls),
  \texttt{tzhu29@uic.edu} (T.~Zhu)}

\begin{abstract}
Over the past twenty years, the field of plasmonics has been
revolutionized with the isolation and utilization of 
two--dimensional materials, particularly graphene.
Consequently there is significant interest in rapid, robust, and
highly accurate computational schemes which can incorporate such
materials. Standard volumetric approaches can be contemplated, but these
require huge computational resources. Here we describe an
algorithm which addresses this issue for nonlocal models of
the electromagnetic response of graphene. Our methodology not only
approximates the graphene layer with a surface current, but also 
reformulates the
governing volumetric equations in terms of surface quantities using
Dirichlet--Neumann Operators. We have recently shown how these
surface equations can be
numerically simulated in an efficient, stable, and accurate fashion
using a High--Order Perturbation of Envelopes methodology. We extend
these results to the nonlocal model mentioned above, and using
an implementation of this algorithm, we study absorbance spectra of TM
polarized plane--waves scattered by a periodic grid of graphene
ribbons.
\end{abstract}

\ams{78A45, 65N35, 78B22, 35J05, 41A58}
\keywords{two--dimensional materials, graphene,
  non--local current models,
  electromagnetic scattering, high--order spectral methods.}

\maketitle

%
%

\section{Introduction}
\label{Sec:Intro}

Over the past twenty years, the field of plasmonics has been
revolutionized with the isolation and utilization of 
two--dimensional materials, particularly graphene. Graphene 
is a single layer of carbon atoms arranged in a honeycomb
lattice which has striking mechanical, chemical, and electronic
properties \cite{GeimNovoselov07,Geim09}. It was first isolated
in 2004 \cite{NGMJZDGF04} resulting in the awarding of the 2011
Nobel Prize to Geim \cite{Geim11} and Novoselov \cite{Novoselov11}.
At this point the literature on graphene is so vast that it is
impossible to describe even a fraction of it here, however, we
point the interested reader to the website maintained by 
\textit{Nature}
dedicated to the major developments in the field \cite{NatureGraphene}.
The authors have found the survey article of
Bludov, Ferriera, Peres, and Vasilevskiy \cite{BFPV13} and survey
book of Goncalves and Peres \cite{GoncalvesPeres16} to be particularly
helpful.

Among the many optical phenomena associated to graphene, the
collective charge oscillations known as plasmons
\cite{JablanBuljanSoljacic09,JablanSoljacicBuljan13} are distinguished.
Recently, the dispersive, nonlocal properties of these graphene plasmons
have generated interest in the engineering literature
\cite{FLowTP15,CSGDTA15,MorgadoSilveirinha20,ZhuWangGuo20,KwiecienBurdaRichter23}
and the object of this contribution is to initiate this study. In
particular, we describe a novel algorithm, inspired by our previous
work \cite{Nicholls19}, for simulating the scattering
returns by a periodic array of graphene strips which takes into
account the effects of nonlocality.
We point out that, in addition to the optical phenomena that we
have mentioned above, graphene has become indispensable
in applications
as diverse as energy storage \cite{OAWS21}, drug delivery and tumor
therapy \cite{SSWCXZS20}, biomedical devices \cite{SZLZ12}, strain
sensors \cite{LZWLJMC20}, and membranes \cite{NCBL21}.

Before beginning our description, we point out that among the
many techniques for numerically simulating structures featuring
graphene (or other two--dimensional materials), simply solving
the volumetric Maxwell equations in either the time domain
(e.g., the Time Domain Finite Difference method \cite{TafloveHagness00})
or frequency domain (e.g., the Finite Element Method \cite{Jin02})
are natural options \cite{GallinetButetMartin15}. Typically,
the graphene is modeled with an effective permittivity
supported in a \textit{thin} layer, or as a surface current with an
effective conductivity at the interface between two layers
\cite{HongNicholls21}. In either
case, commercial black--box Finite Element Method (FEM) software such
as COMSOL Multiphysics\texttrademark \cite{COMSOL} is typically utilized,
however, these simulations are quite
costly due to their low--order accuracy and volumetric character.

In our recent contributions \cite{Nicholls17,Nicholls19} we described
a method which overcomes these drawbacks by not only restating
the frequency domain governing equations in terms of \textit{interfacial}
unknowns, but also describing a highly accurate, efficient, and stable High--Order Spectral (HOS) algorithm
\cite{GottliebOrszag77,ShenTang06,ShenTangWang11}. A feature of our
algorithm is that, in order to close the system of equations, surface
integral operators must be introduced which connect interface traces 
of the scattered fields (Dirichlet data) to their surface normal
derivatives (Neumann data). Such Dirichlet--Neumann Operators (DNOs)
have been widely used and studied in the simulation of linear
wave scattering, e.g., interfacial formulations of scattering problems
\cite{Nicholls12,NichollsOhJohnsonReitich15,Nicholls17,Nicholls19}.

The object of our study is the plasmonic response that can be generated by
graphene and, as in many photonic devices, structural periodicity is
one path. This can be accomplished in several ways, and in one of our 
earlier papers \cite{Nicholls17} we focused upon graphene
deposited on a periodically corrugated grating. In this the height/slope
of the corrugation \textit{shape} was viewed as a perturbation parameter and
the resulting High--Order Perturbation of Shapes (HOPS) scheme sought
corrections to the trivially computed flat--interface, solid
graphene configuration. However, it is much more common to create a
structure with \textit{flat} interfaces upon which periodically
spaced ribbons of graphene are mounted. In the paper \cite{Nicholls19}
we modeled this configuration by multiplying the (constant) current function
by an \textit{envelope} function which transitions between one (where
the graphene is deposited) to zero (where graphene is absent). Our
numerical procedure viewed this envelope function as a perturbation
of the identity function, and we termed that scheme a High--Order
Perturbation of Envelopes (HOPE) algorithm. Our purpose in this
contribution is to extend these latter results to the case of a
nonlocal model for the response of a graphene layer. As we shall show,
as such a model introduces higher order derivatives to the governing
equations, this is a \textit{highly} non--trivial extension requiring
significant theoretical and algorithmic generalizations of those
found in \cite{Nicholls19}.

Using our HOPE method we not only rigorously demonstrate that the 
scattered fields depend analytically upon the envelope perturbation
parameter, but also show that the resulting numerical scheme 
is both robust and accurate,
and extremely rapid in its execution. As with the algorithm specified
in \cite{Nicholls19}, due to the flat interfaces of this geometry,
the relevant DNOs are reduced to simple Fourier
multipliers which can be easily computed in Fourier space. This is to
be contrasted to the case of corrugated interfaces from \cite{Nicholls17}
where a stable and accurate HOPS scheme for their computation is 
highly non--trivial to design and implement.

The rest of the paper is organized as follows: In Section~\ref{Sec:Govern}
we recall the governing equations of our model \cite{Nicholls17,Nicholls19} for
the response of a two--dimensional material mounted between two dielectrics.
In Section~\ref{Sec:Surf} we describe our surface formulation of these
equations, specializing to the patterned, flat--interface configuration 
in Section~\ref{Sec:PattFlat}. We prescribe our HOPE methodology
in Section~\ref{Sec:HOPE}.
We state and prove our analyticity results in Section~\ref{Sec:Anal}.
We conclude with numerical results in Section~\ref{Sec:NumRes},
with a discussion of implementation issues in Section~\ref{Sec:Impl}
and simulation of absorbance spectra in Section~\ref{Sec:AbsSpec}.

%
%

\section{Governing Equations}
\label{Sec:Govern}

Following \cite{Nicholls17}, the structure we consider is displayed
in Figure~\ref{Fig:Structure},
a doubly layered, $y$--invariant medium with periodic interface shaped
by $z = g(x)$, $g(x+d)=g(x)$. This interface separates two domains
filled with dielectrics of permittivities $\epsilon_u$ in 
$S^u := \{ z > g(x) \}$ and $\epsilon_w$ in $S^w := \{ z < g(x) \}$,
respectively. This is illuminated with time--harmonic (of dependence
$\exp(-i \omega t)$) plane--wave radiation of incidence
angle $\theta$, frequency $\omega$, and wavenumber 
$\ku = \sqrt{\epsilon_u} \omega/c_0$,
\bes
v^{\text{inc}} = e^{i (\alpha x - \gammau z)},
\quad
\alpha = \ku \sin(\theta),
\quad
\gammau = \ku \cos(\theta).
\ees
%
%
\begin{figure}[hbt]
\begin{center}
  \includegraphics[width=0.45\textwidth]{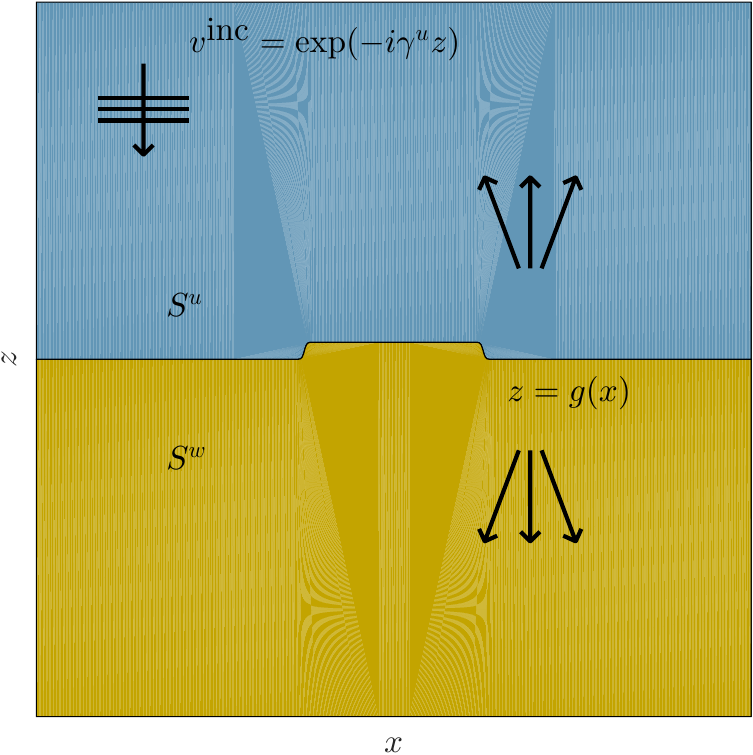}
  \caption{Plot of two--layer structure with periodic interface.}
  \label{Fig:Structure}
\end{center}
\end{figure}

As we detailed in \cite{Nicholls17}, if we choose as unknowns,
$\{ u(x,z), w(x,z) \}$, the laterally quasiperiodic transverse 
components of either the electric (Transverse Electric--TE)
or magnetic (Transverse Magnetic--TM) fields, then the governing
equations in this two--layer configuration are
\bse
\label{Eqn:Double}
\begin{align}
& u - w + A \tau_w \pN w = \xi, && z = g(x), \\
& \tau_u \pN u - \tau_w \pN w + B w = \tau_u \nu, && z = g(x),
\end{align}
\ese
where $\pN = N \cdot \Grad{}$, $N = (-\px g,1)^T$,
\bes
\tau_m = \begin{cases} 1, & \text{TE}, \\ 
  1/\epsilon_m, & \text{TM}, \end{cases}
\quad
A = \begin{cases} 0, & \text{TE}, \\
  \Abs{N} \hat{\sigma}/(i k_0), & \text{TM}, \end{cases}
\quad
B = \begin{cases} (i k_0) \hat{\sigma}/\Abs{N}, & \text{TE}, \\
  0, & \text{TM}, \end{cases}
\ees
for $m \in \{ u, w \}$, and
\bes
\xi(x) = -\left. v^{\text{inc}} \right|_{z=g(x)},
\quad
\nu(x) = -\left. \pN v^{\text{inc}} \right|_{z=g(x)}.
\ees
Of particular note is $\hat{\sigma} = \sigma/(\epsilon_0 c_0)$, the
dimensionless \textit{surface} current which models the effects of
the graphene (or other two--dimensional material) deposited at the
interface between the two layers.

%
%

\section{Surface Formulation}
\label{Sec:Surf}

Following \cite{Nicholls12,Nicholls17} we now reformulate the problem 
\eqref{Eqn:Double} in terms of surface integral operators, in this case 
Dirichlet--Neumann Operators (DNOs).  For this we define the Dirichlet
traces
\bes
U(x) := u(x,g(x)),
\quad
W(x) := w(x,g(x)),
\ees
and the outward pointing Neumann traces
\bes
\tilde{U}(x) := -(\pN u)(x,g(x)),
\quad
\tilde{W}(x) := (\pN w)(x,g(x)).
\ees
In terms of these \eqref{Eqn:Double} read
\bse
\label{Eqn:Double:DNO}
\begin{align}
& U - W + A \tau_w \tilde{W} = \xi, \\
& -\tau_u \tilde{U} - \tau_w \tilde{W} + B W = \tau_u \nu.
\end{align}
\ese
These specify two equations for four unknowns which would
be problematic except that $U$ and $\tilde{U}$
are connected, as are $W$ and $\tilde{W}$. We formalize this with the
following definitions \cite{Nicholls16b}.

\begin{Def}
Given the unique quasiperiodic upward propagating solution \cite{ArensHab} to
\be
\label{Eqn:Helm:u}
\Laplacian{u} + k_u^2 u = 0,
\quad
z > g(x),
\ee
subject to the Dirichlet condition, $u(x,g(x)) = U(x)$,
the Neumann data, $\tilde{U}(x)$, can be computed. The DNO
$G$ is defined by
\bes
G(g): U \rightarrow \tilde{U}.
\ees
\end{Def}
\begin{Def}
Given the unique quasiperiodic downward propagating solution \cite{ArensHab} to
\be
\label{Eqn:Helm:w}
\Laplacian{w} + k_w^2 w = 0,
\quad
z < g(x),
\ee
subject to the Dirichlet condition, $w(x,g(x)) = W(x)$,
the Neumann data, $\tilde{W}(x)$, can be computed. The DNO
$J$ is defined by
\bes
J(g): W \rightarrow \tilde{W}.
\ees
\end{Def}

Negating the second equation, \eqref{Eqn:Double:DNO} can now be written as
\be
\label{Eqn:DNO}
\begin{pmatrix} I & -I + A \tau_w J \\ 
  \tau_u G & \tau_w J - B \end{pmatrix}
\begin{pmatrix} U \\ W \end{pmatrix}
= \begin{pmatrix} \xi \\ -\tau_u \nu \end{pmatrix}.
\ee

%
%

\subsection{The Patterned, Flat--Interface Configuration}
\label{Sec:PattFlat}

The configurations of interest to engineers \cite{BFPV13,GoncalvesPeres16}
often feature \textit{flat} layer interfaces with \textit{patterned} 
graphene sandwiched in between. For this we use the modeling assumptions
\bes
g(x) \equiv 0,
\quad
\hat{\sigma} \approx \hsigmaBGK X(x;\delta),
\ees
where $\hsigmaBGK$ is a (dimensionless) 
Bhatnagar--Gross--Krook (BGK) model for the graphene
\cite{FLowTP15},
\bes
\hsigmaBGK = \hsigmaD \left\{ 1 - v_F^2 
  \left( \frac{3 f + 2 i/\tau}{4 f (f + i/\tau)^2} \right)
  \px^2 \right\}
  =: \hsigmaloc - \hsigmanloc \px^2,
\ees
where the local term comes from a Drude model
\cite{BFPV13,GoncalvesPeres16},
\bes
\hsigmaD = \frac{\sigma_0}{\epsilon_0 c_0} 
  \left( \frac{4 E_F}{\pi} \right) 
  \frac{1}{\hbar \tilde{\gamma} - i \hbar \omega}
  = \frac{2 E_F e^2}{\epsilon_0 c_0} 
  \left( \frac{1}{\Gamma - i h f} \right),
\ees
where $\sigma_0 = \pi e^2/(2 h)$ is the universal AC conductivity
of graphene \cite{GoncalvesPeres16}, $e>0$ is the elementary charge,
$h$ is Planck's constant, $\hbar = h/(2 \pi)$, $E_F$ is the (local) 
Fermi level position, and $\tilde{\gamma}$ is the relaxation rate.
($\Gamma = \hbar \tilde{\gamma}$ is another frequently used notation.)
Further, $v_F$ is the Fermi velocity, $f = \omega/(2 \pi)$ is the
ordinary frequency of the incident radiation, and $\tau$ 
is the carrier
lifetime. We will also have use for the following decomposition
of $A = \Aloc - \Anloc$ and $B = \Bloc - \Bnloc$,
\bes
\Aloc = \begin{cases} 0, & \text{TE}, \\
  \Abs{N} \hsigmaloc/(i k_0), & \text{TM}, \end{cases},
\quad
\Anloc = \begin{cases} 0, & \text{TE}, \\
  \Abs{N} (\hsigmanloc/(i k_0)) \px^2, & \text{TM}, \end{cases},
\ees
and
\bes
\Bloc = \begin{cases} (i k_0) \hsigmaloc/\Abs{N}, & \text{TE}, \\
  0, & \text{TM}, \end{cases},
\quad
\Bnloc = \begin{cases} (i k_0) (\hsigmanloc/\Abs{N}) \px^2, & \text{TE}, \\
  0, & \text{TM}, \end{cases}.
\ees

Also, $X(x;\delta)$ is a $d$--periodic (in $x$) \textit{envelope} 
function which we use to model the patterning. For this we permit
the envelope to be varied with a parameter $\delta$, e.g.,
\be
\label{Eqn:X}
X(x;\delta) = X_0 + \delta X_1(x),
\ee
where $X_0 \neq 0$ as explained below, and
\bes
X_1(x) = -X_0
  + \begin{cases} 
  \sqrt{ 1 - 4 \left( \frac{x-d/2}{w} \right)^2 }, & d/2-w/2 < x < d/2+w/2, \\
  0, & \text{else},
  \end{cases}
\ees
and $w$ is the ribbon width; see Figure~\ref{Fig:Envelope}. This profile
was specified in \cite{BFPV13} to model not only the patterning but also
edge effects.
%
%
%
%
%
%
\begin{figure}[hbt]
\begin{center}
  \includegraphics[width=0.45\textwidth]{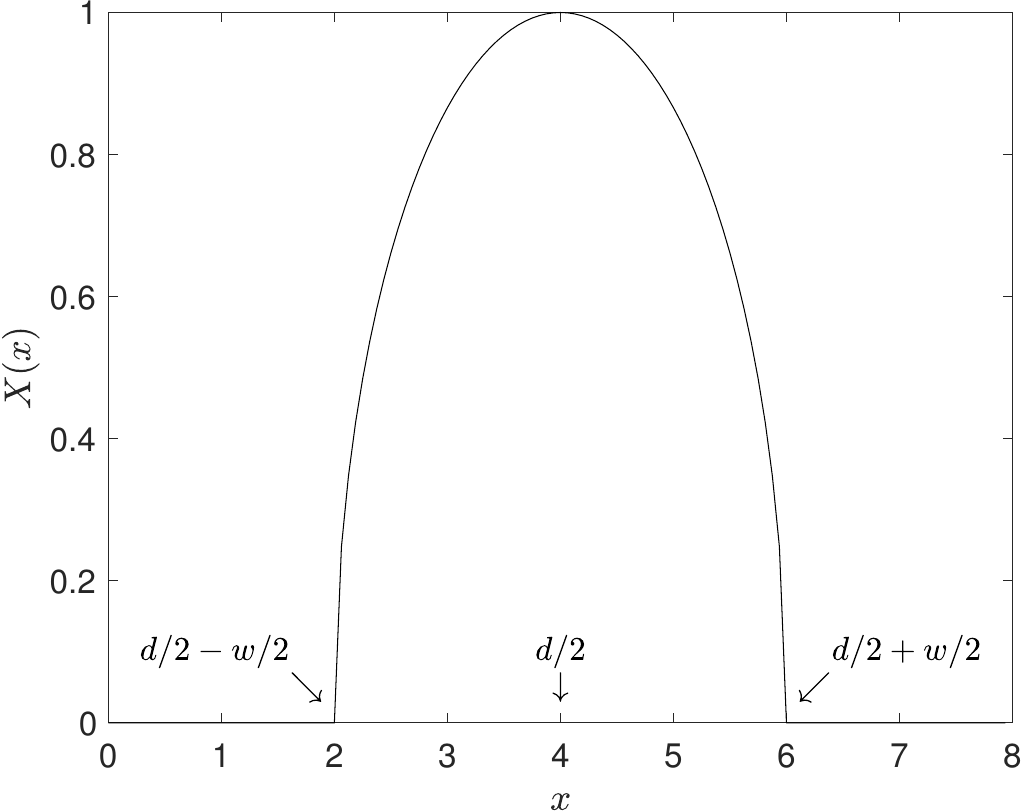}
  \caption{Plot of the current envelope function, 
    $X(x) = X_0 + X_1(x)$.}
  \label{Fig:Envelope}
\end{center}
\end{figure}

With these assumptions, and denoting $G_0 = G(0)$ and $J_0 = J(0)$,
we consider the modification of \eqref{Eqn:DNO},
\be
\label{Eqn:DNO:0}
\begin{pmatrix} I & -I + A X(x;\delta) \tau_w J_0 \\ 
  \tau_u G_0 & \tau_w J_0 - B X(x;\delta) \end{pmatrix}
\begin{pmatrix} U \\ W \end{pmatrix}
= \begin{pmatrix} \xi \\ -\tau_u \nu \end{pmatrix}.
\ee

\begin{Rk}
Importantly, in the flat--interface case, $g(x) \equiv 0$, the
DNOs can be explicitly specified in terms of Fourier multipliers.
Considering the upper layer DNO, $G_0$, we recall the Rayleigh expansions
\cite{Petit80,Yeh05}
\be
\label{Eqn:Ray:u}
u(x,z) = \sump \hat{U}_p e^{i \alpha_p x + i \gamma_{u,p} z},
\ee
where
\bse
\label{Eqn:AlphaGamma}
\be
\alpha_p = \alpha + (2 \pi/d) p,
\quad
\gamma_{m,p} = \begin{cases}
  \sqrt{k_m^2 - \alpha_p^2}, & p \in \cU_m, \\
  i \sqrt{\alpha_p^2 - k_m^2 }, & p \not \in \cU_m,
  \end{cases}
\quad
m \in \{ u, w \},
\ee
and the propagating modes are
\be
\cU_m := \left\{ p \in \Integer\ |\ \alpha_p^2 \leq k_m^2 \right\},
\quad
m \in \{ u, w \},
\ee
\ese
which gives the exact solution of \eqref{Eqn:Helm:u} with Dirichlet data
$u(x,0) = U(x)$. From this the Neumann data can readily be shown to be
\bes
\tilde{U}(x) = -\pz u(x,0)
  = \sump -i \gamma_{u,p} \hat{U}_p e^{i \alpha_p x},
\ees
which gives
\bes
G_0[U] = \sump -i \gamma_{u,p} \hat{U}_p e^{i \alpha_p x}
  =: -i \gamma_{u,D} U,
\ees
defining the order--one Fourier multiplier, $\gamma_{u,D}$.
In analogous fashion, based on the Rayleigh expansion solution
of \eqref{Eqn:Helm:w},
\be
\label{Eqn:Ray:w}
w(x,z) = \sump \hat{W}_p e^{i \alpha_p x - i \gamma_{w,p} z},
\ee
one can demonstrate that
\bes
J_0[W] = \sump -i \gamma_{w,p} \hat{W}_p e^{i \alpha_p x}
  =: -i \gamma_{w,D} W.
\ees
\end{Rk}

%
%

\subsection{A High--Order Perturbation of Envelopes Method}
\label{Sec:HOPE}

As we shall see, \eqref{Eqn:DNO:0} is straightforward to solve
provided that $X(x) \equiv X_0 \in \Real$. In this case the 
equations are diagonalized by the Fourier transform and the solution 
can be found wavenumber--by--wavenumber. We build upon this observation
by considering envelope functions of the form \eqref{Eqn:X}
and proceeding with (regular) perturbation theory. As we are considering
deformations of the envelope (through the parameter $\delta$), we term 
such a scheme
a ``High--Order Perturbation of Envelopes'' (HOPE)
method to contrast with ``High--Order Perturbation of Surfaces'' (HOPS)
algorithms where the height/slope of the interface \textit{shape} is 
the perturbation parameter \cite{NichollsReitich99}.

For this HOPE approach we posit expansions
\be
\label{Eqn:UW:Exp}
\{ U, W \} = \{ U, W \}(x;\delta)
  = \sumell \{ U_{\ell}, W_{\ell} \}(x) \delta^{\ell},
\ee
and derive recursive formulas for the $\{ U_{\ell}, W_{\ell} \}$. It is not
difficult to see that, at order $\ell \geq 0$, one must solve
\be
\label{Eqn:MVR:0}
\begin{pmatrix} I & -I + A X_0 \tau_w J_0 \\ 
  \tau_u G_0 & \tau_w J_0 - B X_0 \end{pmatrix}
\begin{pmatrix} U_{\ell} \\ W_{\ell} \end{pmatrix}
= \delta_{\ell,0} \begin{pmatrix} \xi \\ -\tau_u \nu \end{pmatrix}
  + \begin{pmatrix} -A X_1(x) \tau_w J_0 W_{\ell-1} \\ 
    B X_1(x) W_{\ell-1} \end{pmatrix},
\ee
where $\delta_{\ell,q}$ is the Kronecker delta, and $W_{-1} \equiv 0$.
We will presently show that \eqref{Eqn:UW:Exp} converge strongly in 
appropriate Sobolev spaces. Importantly, these recursions also result
in a numerical algorithm that delivers HOS accuracy.

\begin{Rk}
\label{Rk:NonRes}
As we have pointed out that the operators $G_0$ and $J_0$ are
diagonalized by the Fourier transform, we can state the condition 
of ``non--resonance'' which we require for uniqueness of solutions.
As we shall see, in Transverse Electric (TE) polarization 
($A = 0$ and $\tau_m = 1$) we will require that the determinant function
\begin{align}
\DeltaTE_p 
  & := \widehat{(G_0)}_p + \widehat{(J_0)}_p - B X_0 \notag \\
  & = -i \gamma_{u,p} - i \gamma_{w,p} 
  - i k_0 \hsigmaloc X_0 - i k_0 \hsigmanloc X_0 \alpha_p^2,
\label{Eqn:DeltaTE}
\end{align}
satisfies, for some $\mu > 0$, 
$\min_{-\infty < p < \infty} \left\{ \Abs{\DeltaTE_p} \right\} > \mu$.
In Transverse Magnetic (TM) polarization ($B = 0$) it must be that the 
determinant function
\begin{align}
\DeltaTM_p 
  & := \tau_u \widehat{(G_0)}_p + \tau_w \widehat{(J_0)}_p
    - \tau_u \tau_w A X_0 \widehat{(G_0)}_p \widehat{(J_0)}_p \notag \\
  & = -\tau_u i \gamma_{u,p} - \tau_w i \gamma_{w,p}
    + \tau_u \tau_w \left( \frac{\hsigmaloc}{i k_0} \right) 
    X_0 \gamma_{u,p} \gamma_{w,p} \notag \\
  & \quad + \tau_u \tau_w \left( \frac{\hsigmanloc}{i k_0} \right) 
    X_0 \gamma_{u,p} \gamma_{w,p} \alpha_p^2,
\label{Eqn:DeltaTM}
\end{align}
satisfies, for some $\mu > 0$,
$\min_{-\infty < p < \infty} \left\{ \Abs{\DeltaTM_p} \right\} > \mu$.
\end{Rk}

We now describe precise progress on this in the following lemma.
\begin{Lemma}
If $X_0 > 0$, for any $p \in \Integer$
\bes
\DeltaTE_p \neq 0, \quad \DeltaTM_p \neq 0.
\ees
\end{Lemma}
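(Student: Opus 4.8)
The plan is to show, for each polarization, that a specific real or imaginary part of the determinant function has a definite (nonzero) sign, uniformly in $p$. Two structural facts drive the argument. First, by \eqref{Eqn:AlphaGamma} each $\gamma_{m,p}$ is either a nonnegative real number (if $p \in \cU_m$) or a purely imaginary number with positive imaginary part (if $p \notin \cU_m$); thus $\mathrm{Re}(\gamma_{m,p}) \geq 0$ and $\mathrm{Im}(\gamma_{m,p}) \geq 0$ always, and $\gamma_{m,p} \neq 0$ unless $\alpha_p^2 = k_m^2$. Second, the Fourier symbol of the BGK conductivity, $S_p := \hsigmaloc + \hsigmanloc \alpha_p^2$, has strictly positive real part. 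Given these, together with $k_0, X_0, \tau_u, \tau_w > 0$ and $\alpha_p \in \Real$, the lemma will drop out of \eqref{Eqn:DeltaTE} and \eqref{Eqn:DeltaTM} by inspection.

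The step I expect to be the real obstacle is establishing $\mathrm{Re}(S_p) > 0$, since it requires unwinding the BGK prefactor. The local part $\hsigmaloc = \hsigmaD$ is a positive constant times $1/(\Gamma - i h f)$ with $\Gamma = \hbar \tilde{\gamma} > 0$ and $h f > 0$, so it lies in the open first quadrant; in particular $\mathrm{Re}(\hsigmaloc) > 0$. For the nonlocal part I would rationalize the denominator:
\bes
\frac{3 f + 2 i/\tau}{4 f (f + i/\tau)^2}
  = \frac{f(3 f^2 + \tau^{-2}) - 2 i \tau^{-1}(2 f^2 + \tau^{-2})}
         {4 f (f^2 + \tau^{-2})^2},
\ees
whose denominator is positive real and whose numerator has positive real part and negative imaginary part, so this factor lies in the open fourth quadrant. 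Since $\hsigmanloc$ is $\hsigmaD$ times $v_F^2 > 0$ times this factor, its argument lies in $(-\pi/2,\pi/2)$, hence $\mathrm{Re}(\hsigmanloc) > 0$; with $\alpha_p^2 \geq 0$ this yields $\mathrm{Re}(S_p) = \mathrm{Re}(\hsigmaloc) + \alpha_p^2 \mathrm{Re}(\hsigmanloc) > 0$.

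With this in hand the TE case is immediate: \eqref{Eqn:DeltaTE} gives $\DeltaTE_p = -i(\gamma_{u,p} + \gamma_{w,p}) - i k_0 X_0 S_p$, so $\mathrm{Im}(\DeltaTE_p) = -\mathrm{Re}(\gamma_{u,p}) - \mathrm{Re}(\gamma_{w,p}) - k_0 X_0 \mathrm{Re}(S_p) < 0$, the first two terms being $\leq 0$ and the last $< 0$; hence $\DeltaTE_p \neq 0$. For the TM case I would write \eqref{Eqn:DeltaTM} as $\DeltaTM_p = -i Z_p$ with $Z_p := \tau_u \gamma_{u,p} + \tau_w \gamma_{w,p} + (X_0 \tau_u \tau_w / k_0)\, \gamma_{u,p} \gamma_{w,p}\, S_p$, and show $Z_p \neq 0$ by splitting on the modal status of $p$. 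If $p \in \cU_u \cap \cU_w$, all three summands are real with nonnegative real part, so $\mathrm{Re}(Z_p) \geq \tau_u \gamma_{u,p} + \tau_w \gamma_{w,p} > 0$ (this vanishes only if $\gamma_{u,p} = \gamma_{w,p} = 0$, forcing $k_u = k_w$, which is excluded by $\epsilon_u \neq \epsilon_w$). If exactly one of $p \in \cU_u$, $p \in \cU_w$ holds, say $\gamma_{u,p} \geq 0$ real and $\gamma_{w,p} = i\beta$ with $\beta > 0$, then $\mathrm{Im}(Z_p) = \tau_w \beta + (X_0 \tau_u \tau_w / k_0)\, \gamma_{u,p} \beta\, \mathrm{Re}(S_p) > 0$ (and symmetrically with $u$ and $w$ interchanged). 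If $p \notin \cU_u \cup \cU_w$, then $\gamma_{u,p}\gamma_{w,p}$ is a strictly negative real number, so $\mathrm{Re}(Z_p) = (X_0 \tau_u \tau_w / k_0)\, \gamma_{u,p}\gamma_{w,p}\, \mathrm{Re}(S_p) < 0$. In every case $Z_p \neq 0$, so $\DeltaTM_p \neq 0$.

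Beyond the quadrant computation for $\hsigmanloc$, the remainder is routine bookkeeping, and the only edge case, $\gamma_{u,p} = \gamma_{w,p} = 0$, is harmless under the standing assumption that the two dielectrics differ (equivalently, if Rayleigh singularities are excluded from the outset).
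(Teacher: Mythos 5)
Your proposal is correct and follows essentially the same route as the paper's proof: you establish that the BGK conductivity symbol $\hsigmaloc + \hsigmanloc\alpha_p^2$ (the paper's $\Sigma$) has strictly positive real part by placing the Drude factor in the first quadrant and the rationalized nonlocal factor in the fourth, and then you conclude by a sign analysis of $i\Delta_p$ split according to whether each $\gamma_{m,p}$ is real or purely imaginary, which is the paper's four-case computation in slightly regrouped form (including the same reliance on $\epsilon_u \neq \epsilon_w$ to dispose of the double Rayleigh anomaly). No substantive differences.
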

\begin{proof}
We begin with the notation
\bes
z = z' + i z'' \in \Complex,
\quad
\quad z', z'' \in \Real,
\ees
and recall that, for $\hsigmaloc = \hsigmaloc' + i \hsigmaloc''$,
\bes
\hsigmaloc' = \frac{(2 E_F e^2) \Gamma}
  {\epsilon_0 c_0 (\Gamma^2 + h^2 f^2)} > 0,
\quad
\hsigmaloc'' = \frac{(2 E_F e^2) h f}
  {\epsilon_0 c_0 (\Gamma^2 + h^2 f^2)} > 0.
\ees
Furthermore, we have, c.f. \eqref{Eqn:AlphaGamma},
\bes
\gamma_{m,p} = \begin{cases} 
  \gamma_{m,p}', & p \in \cU_m, \\
  i \gamma_{m,p}'', & p \not \in \cU_m,
  \end{cases}
  \quad
  \gamma_{m,p}', \gamma_{m,p}'' \geq 0,
\ees
for $m \in \{ u, w \}$, so that either
\bes
\{ \gamma_{m,p}' \geq 0\ \text{and}\ \gamma_{m,p}'' = 0 \}
\quad \text{or} \quad
\{ \gamma_{m,p}' = 0\ \text{and}\ \gamma_{m,p}'' \geq 0 \}.
\ees
From the nonlocal current model we examine the term
\bes
Q = v_F^2 \left( \frac{3 f + 2 i/\tau}{4 f (f + i/\tau)^2} \right),
\ees
and calculate
\begin{align*}
Q & = \frac{v_F^2\tau(3f\tau+2i)}{4f(f\tau+i)^2}
    = \frac{v_F^2\tau}{4f(f^2\tau^2+1)^2}(3f\tau+2i)(f\tau-i)^2 \\
  & = \frac{v_F^2\tau}{4f(f^2\tau^2+1)^2}[f\tau(3f^2\tau^2+1)
     -2i(2f^2\tau^2+1)],
\end{align*}
therefore we have
\bes
Q' = \frac{v_F^2\tau^2(3f^2\tau^2+1)}{4(f^2\tau^2+1)^2} > 0,
\quad
Q'' = -\frac{v_F^2\tau(2f^2\tau^2+1)}{2f(f^2\tau^2+1)^2} < 0.
\ees
Finally, another crucial term in both polarizations is
\begin{align*}
\Sigma 
  & := \hsigmaloc \left( 1 + Q \alpha_p^2 \right) \\
  & = \left( \hsigmaloc' + i \hsigmaloc'' \right)
  \left( (1 + Q' \alpha_p^2) + i Q'' \alpha_p^2 \right) \\
  & = \left( \hsigmaloc' (1 + Q' \alpha_p^2)
  - \hsigmaloc'' Q'' \alpha_p^2 \right)
  + i \left( \hsigmaloc' Q'' \alpha_p^2 
  + \hsigmaloc'' (1 + Q' \alpha_p^2) \right).
\end{align*}
While the imaginary part is indeterminate, we can state that,
due to the signs of $\{ \hsigmaloc', \hsigmaloc'', Q', Q'' \}$,
the real part satisfies
\bes
\Sigma' = \RealPart{ \Sigma }
  = \hsigmaloc' (1 + Q' \alpha^2) - \hsigmaloc'' Q'' \alpha_p^2 > 0.
\ees

We begin with the case of TE polarization where we have,
from the parity of $\Sigma'$,
\bes
\RealPart{i \DeltaTE_p} = \gamma_{u,p}' + \gamma_{w,p}'
  + k_0 X_0 \Sigma' > 0.
\ees
For TM polarization we have
\bes
i \DeltaTM_p = \tau_u \gamma_{u,p} + \tau_w \gamma_{w,p}
  + \tau_u \gamma_{u,p} \tau_w \gamma_{w,p}
  \left( \frac{X_0}{k_0} \right) \Sigma,
\ees
and begin with the case of a Rayleigh singularity in the
upper layer, $\gamma_{u,p} = 0$. We point out that, if
$\epsilon_w \neq \epsilon_u$ then 
$\gamma_{w,p} \neq \gamma_{u,p} = 0$
for this choice of $p$. In this case
\bes
i \DeltaTM_p = \tau_w \gamma_{w,p} \neq 0.
\ees
Clearly, a Rayleigh singularity in the lower layer, 
$\gamma_{w,p} = 0$, can be handled simlarly. So, we now
fix on the situation of no Rayleigh singularities,
\bes
\{ \gamma_{m,p}' > 0\ \text{and}\ \gamma_{m,p}'' = 0 \}
\quad \text{or} \quad
\{ \gamma_{m,p}' = 0\ \text{and}\ \gamma_{m,p}'' > 0 \},
\ees
and divide the calculation into four parts:
\begin{enumerate}
\item Case $\gamma_{u,p}' = \gamma_{w,p}' = 0$. Here
  \bes
  \RealPart{i \DeltaTM_p} = -\tau_u \gamma_{u,p}'' \tau_w \gamma_{w,p}''
  \left( \frac{X_0}{k_0} \right) \Sigma' < 0.
  \ees
\item Case $\gamma_{u,p}' = \gamma_{w,p}'' = 0$. Here
  \bes
  \ImagPart{i \DeltaTM_p} = \tau_u \gamma_{u,p}''
    + \tau_u \gamma_{u,p}'' \tau_w \gamma_{w,p}' 
    \left( \frac{X_0}{k_0} \right) \Sigma'
    > 0.
  \ees
\item Case $\gamma_{u,p}'' = \gamma_{w,p}' = 0$. Here
  \bes
  \ImagPart{i \DeltaTM_p} = \tau_w \gamma_{w,p}''
    + \tau_u \gamma_{u,p}' \tau_w \gamma_{w,p}'' 
    \left(\frac{X_0}{k_0} \right) \Sigma'
    > 0.
  \ees
\item Case $\gamma_{u,p}'' = \gamma_{w,p}'' = 0$. Here
  \bes
  \RealPart{i \DeltaTM_p} = \tau_u \gamma_{u,p}' + \tau_w \gamma_{w,p}'
    + \tau_u \gamma_{u,p}' \tau_w \gamma_{w,p}'
    \left( \frac{X_0}{k_0} \right) \Sigma'
    > 0.
  \ees
\end{enumerate}
The conclusion of these four computations is that 
$\DeltaTM_p \neq 0$.
\end{proof}

%
%

\section{Analyticity}
\label{Sec:Anal}

Before describing our theoretical results we pause to specify
the function spaces we will require. For any real $s \geq 0$ we
recall the classical $L^2$--based Sobolev norm
\bes
\SobNorm{U}{s}^2 := \sump \Angle{p}^{2 s} \Abs{\hat{U}_p}^2,
\quad
\Angle{p}^2 := 1 + \Abs{p}^2,
\quad
\hat{U}_p := \frac{1}{d} \int_0^d U(x) e^{i \alpha_p x} \; dx,
\ees
which gives rise to the Sobolev space
\bes
H^s([0,d]) := \left\{ U(x) \in L^2([0,d])\ |\ 
  \SobNorm{U}{s} < \infty \right\}.
\ees
With this definition it is a simple matter to prove the following
Lemma.
\begin{Lemma}
\label{Lemma:G0J0}
For any real $s \geq 0$ there exist constants $C_G, C_J > 0$ such that
\bes
\SobNorm{G_0 U}{s} \leq C_G \SobNorm{U}{s+1},
\quad
\SobNorm{J_0 W}{s} \leq C_J \SobNorm{W}{s+1},
\ees
for any $U, W \in H^{s+1}$.
\end{Lemma}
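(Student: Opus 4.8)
The plan is to exploit the fact, recorded in the Remark of Section~\ref{Sec:PattFlat}, that $G_0$ and $J_0$ act as Fourier multipliers: $\widehat{(G_0 U)}_p = -i \gamma_{u,p} \hat{U}_p$ and $\widehat{(J_0 W)}_p = -i \gamma_{w,p} \hat{W}_p$. Passing to the Fourier side, the lemma reduces to establishing a single uniform-in-$p$ bound on the symbols, namely that there exist $C_u, C_w > 0$ with $\Abs{\gamma_{m,p}} \leq C_m \Angle{p}$ for every $p \in \Integer$ and $m \in \{u,w\}$. Everything else is Plancherel.

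First I would prove this symbol estimate. By \eqref{Eqn:AlphaGamma}, for the finitely many propagating indices $p \in \cU_m$ we have $\Abs{\gamma_{m,p}} = \sqrt{k_m^2 - \alpha_p^2} \leq \Abs{k_m}$, a bound independent of $p$; while for the evanescent indices $p \not\in \cU_m$ we have $\Abs{\gamma_{m,p}} = \sqrt{\alpha_p^2 - k_m^2} \leq \Abs{\alpha_p} \leq \Abs{\alpha} + (2\pi/d)\Abs{p}$. Since $\Abs{\alpha} + (2\pi/d)\Abs{p} \leq C \Angle{p}$ for a suitable constant depending only on $\alpha$ and $d$, and $\Abs{k_m} \leq \Abs{k_m}\Angle{p}$ trivially, combining the two cases yields $\Abs{\gamma_{m,p}} \leq C_m \Angle{p}$ uniformly in $p$, with $C_m$ depending only on $k_m$, $\alpha$, and $d$.

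The estimate then follows immediately:
\begin{align*}
\SobNorm{G_0 U}{s}^2
  &= \sump \Angle{p}^{2s} \Abs{\gamma_{u,p}}^2 \Abs{\hat{U}_p}^2 \\
  &\leq C_u^2 \sump \Angle{p}^{2s+2} \Abs{\hat{U}_p}^2
  = C_u^2 \SobNorm{U}{s+1}^2,
\end{align*}
so that $C_G := C_u$ suffices, and the right-hand side is finite precisely when $U \in H^{s+1}$. The argument for $J_0$ is word-for-word identical with $w$ replacing $u$, giving $C_J := C_w$.

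I do not anticipate any genuine obstacle: as the text notes, this is ``a simple matter.'' The only point demanding even minimal care is the uniform symbol bound, which is where the specific arithmetic structure $\alpha_p = \alpha + (2\pi/d)p$ and the finiteness of the set $\cU_m$ of propagating modes are used (the latter ensuring the bounded contribution from $p \in \cU_m$ does not spoil the $\Angle{p}$ growth). Once that is in place the remainder is a one-line Plancherel computation, and the gain of exactly one derivative reflects that $\gamma_{m,p}$ is an order-one Fourier multiplier.
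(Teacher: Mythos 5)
Your proof is correct and is exactly the argument the paper has in mind: the paper omits the proof, calling it ``a simple matter,'' and your route---the Fourier--multiplier representation of $G_0$, $J_0$ together with the uniform symbol bound $\Abs{\gamma_{m,p}} \leq C_m \Angle{p}$ (which is precisely why the paper calls $\gamma_{u,D}$ an order--one Fourier multiplier) followed by Plancherel---is the intended one. The only quibble is that the finiteness of $\cU_m$ is not actually needed, since the pointwise bound $\Abs{\gamma_{m,p}} \leq \Abs{k_m} \leq \Abs{k_m}\Angle{p}$ already suffices on the propagating set.
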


We also recall, for any integer $s \geq 0$, the space of
$s$--times continuously differentiable functions with 
the H\"older norm
\bes
\HolderNorm{f}{s} = \max_{0 \leq \ell \leq s} \SupNorm{\px^{\ell} f}.
\ees
For later reference we recall the classical result
\cite{Evans10}.
\begin{Lemma}
\label{Lemma:Alg}
For any integer $s \geq 0$ there exists a constant $K = K(s)$ 
such that
\bes
\SobNorm{f U}{s} \leq K \HolderNorm{f}{s} \SobNorm{U}{s}.
\ees
\end{Lemma}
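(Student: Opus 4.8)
The plan is to reduce the Fourier--side definition of $\SobNorm{\cdot}{s}$ to an equivalent derivative--based norm (valid because $s$ is an integer) and then apply the Leibniz rule together with the elementary estimate $\Norm{fg}{L^2} \leq \SupNorm{f} \Norm{g}{L^2}$. The whole argument is classical, cf.\ \cite{Evans10}, so the "proof" is really a matter of assembling these standard pieces and tracking the constant.

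First I would record that, for integer $s \geq 0$, the quantity $\SobNorm{U}{s}$ is equivalent --- with constants depending only on $s$ and the fixed lattice data $d,\alpha$ --- to $\left( \sum_{\ell=0}^{s} \Norm{\px^{\ell} U}{L^2}^2 \right)^{1/2}$. This is because $\px$ acts on the $p$--th quasiperiodic mode $e^{i \alpha_p x}$ as multiplication by a quantity of modulus $\Abs{\alpha_p}$, and since $\alpha_p = \alpha + (2\pi/d)p$ one has $c_1 \Angle{p} \leq \Angle{\alpha_p} \leq c_2 \Angle{p}$ uniformly in $p \in \Integer$; hence $\Angle{p}^{2s}$ and $\sum_{\ell=0}^{s} \Abs{\alpha_p}^{2\ell}$ are comparable (the $\ell = 0$ term handling the low modes, including any Rayleigh value $\alpha_p = 0$), and Parseval's identity then turns this pointwise comparison into the claimed norm equivalence.

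Next, since $f \in C^{s}$, so that $\px^{j} f \in L^{\infty}$ for $0 \leq j \leq s$, and $U \in H^{s}$, a standard mollification argument shows $fU \in H^{s}$ with the weak derivatives given by the Leibniz formula
\bes
\px^{\ell}(fU) = \sum_{j=0}^{\ell} \binom{\ell}{j} (\px^{j} f)(\px^{\ell-j} U),
\qquad 0 \leq \ell \leq s.
\ees
Taking the $L^2$ norm, using the triangle inequality together with $\Norm{(\px^{j} f)(\px^{\ell-j} U)}{L^2} \leq \SupNorm{\px^{j} f}\, \Norm{\px^{\ell-j} U}{L^2} \leq \HolderNorm{f}{s}\, \Norm{\px^{\ell-j} U}{L^2}$, and then summing over $0 \leq \ell \leq s$, one gets $\sum_{\ell=0}^{s} \Norm{\px^{\ell}(fU)}{L^2}^2 \leq C(s)\, \HolderNorm{f}{s}^2\, \SobNorm{U}{s}^2$, where $C(s)$ collects the (finitely many) binomial coefficients. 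Composing this with the two--sided equivalence from the previous step absorbs everything into a single constant $K = K(s)$ and yields the stated bound.

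I do not anticipate a genuine obstacle: the content is entirely routine. The only points that merit a line of care are bookkeeping --- confirming that the norm--equivalence constants and the combinatorial factor $C(s)$ depend on $s$ (and the fixed domain parameters) but not on $f$ or $U$ --- and, for full rigor, justifying the Leibniz identity for the product of a $C^{s}$ function with an $H^{s}$ function by approximating $U$ by smooth quasiperiodic functions.
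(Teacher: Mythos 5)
Your argument is correct: the norm equivalence for integer $s$ (via $\Angle{\alpha_p} \sim \Angle{p}$ and Parseval), followed by the Leibniz rule and the $L^\infty$--$L^2$ bound, is precisely the standard proof of this algebra property. The paper itself supplies no proof, merely recalling the result as classical with a citation to \cite{Evans10}, and your write--up is the argument that citation stands in for.
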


%
%


We now begin the rigorous analysis of the expansions \eqref{Eqn:UW:Exp}
and, for this, we appeal to the general theory of analyticity of
solutions of linear systems of equations. For a particular description
of the procedure, we follow the developments found in \cite{Nicholls16b}
for the solution of
\be
\label{Eqn:AVR}
\bA(\delta) \bV(\delta) = \bR(\delta),
\ee
which is (3.1) of \cite{Nicholls16b} with $\Eps$ replaced by $\delta$.
In \cite{Nicholls16b}, given expansions
\be
\label{Eqn:AR:Exp}
\bA(\delta) = \sumell \bA_{\ell} \delta^{\ell},
\quad
\bR(\delta) = \sumell \bR_{\ell} \delta^{\ell},
\ee
we seek a solution of the form
\be
\label{Eqn:V:Exp}
\bV(\delta) = \sumell \bV_{\ell} \delta^{\ell},
\ee
which satisfies
\bes
\bV_{\ell} = \bA_0^{-1} \left[ \bR_{\ell} 
  - \sum_{q=0}^{\ell-1} \bA_{\ell-q} \bV_q \right],
\quad
\ell \geq 0.
\ees
We restate the main result here for completeness.
\begin{Thm}[\cite{Nicholls16b}]
\label{Thm:Anal}
Given two Banach spaces $Y$ and $Z$, suppose that:
\begin{enumerate}[label=(H\arabic*)]
\item \label{Thm:Anal:H1}
  $\bR_{\ell} \in Z$ for all $\ell \geq 0$, and there
  exist constants $C_R > 0$, $B_R > 0$ such that
  \bes
  \Norm{\bR_{\ell}}{Y} \leq C_R B_R^{\ell},
  \quad
  \ell \geq 0.
  \ees
\item \label{Thm:Anal:H2}
  $\bA_{\ell}: Y \rightarrow Z$ for all $\ell \geq 0$, and
  there exists constants $C_A > 0$, $B_A > 0$ such that
  \bes
  \Norm{\bA_{\ell}}{Y \rightarrow Z} \leq C_A B_A^{\ell},
  \quad
  \ell \geq 0.
  \ees
\item \label{Thm:Anal:H3}
  $\bA_0^{-1}: Z \rightarrow Y$, and there exists a constant
  $C_e > 0$ such that
  \bes
  \Norm{\bA_0^{-1}}{Z \rightarrow Y} \leq C_e.
  \ees
\end{enumerate}
Then the equation \eqref{Eqn:AVR} has a unique solution \eqref{Eqn:V:Exp},
and there exist constants $C_V > 0$ and $B_V > 0$ such that
\bes
\Norm{\bV_{\ell}}{Y} \leq C_V B_V^{\ell},
\quad
\ell \geq 0,
\ees
for any
\bes
C_V \geq 2 C_e C_R,
\quad
B_V \geq \max \left\{ B_R, 2 B_A, 4 C_e C_A B_A \right\},
\ees
which implies that, for any $0 \leq \rho < 1$, \eqref{Eqn:V:Exp}
converges for all $\delta$ such that $B_V \delta < \rho$, i.e.,
$\delta < \rho/B_V$.
\end{Thm}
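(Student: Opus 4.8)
The plan is to prove the coefficient estimate $\Norm{\bV_\ell}{Y} \le C_V B_V^\ell$ by strong induction on $\ell$, and then to read off convergence of \eqref{Eqn:V:Exp} from a geometric majorant. First I would combine the hypotheses into a single ``master estimate.'' By \ref{Thm:Anal:H3} the recursion $\bV_\ell = \bA_0^{-1}\bigl[\bR_\ell - \sum_{q=0}^{\ell-1}\bA_{\ell-q}\bV_q\bigr]$ produces a well-defined element of $Y$: the bracket lies in $Z$ since $\bR_\ell \in Z$ by \ref{Thm:Anal:H1} and $\bA_{\ell-q}\bV_q \in Z$ by \ref{Thm:Anal:H2} (with $\bV_q \in Y$), whence $\bA_0^{-1}$ of it lies in $Y$. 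Taking $\Norm{\cdot}{Y}$ and inserting the bounds of \ref{Thm:Anal:H1}--\ref{Thm:Anal:H3} gives
\[
\Norm{\bV_\ell}{Y} \le C_e C_R B_R^\ell + C_e C_A \sum_{q=0}^{\ell-1} B_A^{\ell-q}\, \Norm{\bV_q}{Y}, \qquad \ell \ge 0.
\]
The base case $\ell = 0$ is immediate: the sum is empty, so $\Norm{\bV_0}{Y} \le C_e C_R \le C_V$ because $C_V \ge 2 C_e C_R$.

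For the inductive step I would assume $\Norm{\bV_q}{Y} \le C_V B_V^q$ for all $q < \ell$ and substitute into the master estimate. Writing $r := B_V/B_A$, the hypothesis $B_V \ge 2B_A$ gives $r \ge 2$, hence $r/(r-1) \le 2$ and therefore
\[
\sum_{q=0}^{\ell-1} B_A^{\ell-q} B_V^q = B_A^\ell \sum_{q=0}^{\ell-1} r^q = B_A^\ell \frac{r^\ell - 1}{r-1} \le \frac{2 B_A}{B_V}\, B_V^\ell .
\]
Combining this with $B_R \le B_V$, the master estimate becomes $\Norm{\bV_\ell}{Y} \le \bigl( C_e C_R + \tfrac{2 C_e C_A B_A}{B_V}\, C_V \bigr) B_V^\ell$. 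The constraint $B_V \ge 4 C_e C_A B_A$ makes $2 C_e C_A B_A / B_V \le 1/2$, and then $C_V \ge 2 C_e C_R$ forces the parenthesis to be at most $C_V$, which closes the induction.

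With $\Norm{\bV_\ell}{Y} \le C_V B_V^\ell$ in hand, the series \eqref{Eqn:V:Exp} is dominated termwise by $C_V \sum_\ell (B_V \Abs{\delta})^\ell$; for $\delta$ with $B_V \delta < \rho < 1$ this geometric series converges and yields the uniform bound $\Norm{\bV(\delta)}{Y} \le C_V/(1-\rho)$, so $\bV(\delta)$ is a well-defined $Y$-valued function analytic on that disk. It then remains to check that this $\bV(\delta)$ actually solves \eqref{Eqn:AVR} and is the only such solution. Forming the Cauchy product $\bA(\delta)\bV(\delta)$ and collecting the coefficient of $\delta^\ell$ reproduces exactly $\bA_0\bV_\ell + \sum_{q=0}^{\ell-1} \bA_{\ell-q}\bV_q$, which equals $\bR_\ell$ by the very definition of the recursion, so $\bA(\delta)\bV(\delta) = \bR(\delta)$ as power series and hence as $Z$-valued analytic functions on the disk; and any analytic solution must have coefficients obeying the same recursion (equate like powers of $\delta$, then invert $\bA_0$), hence coincides with $\bV(\delta)$. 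The main obstacle is the inductive step --- specifically, arranging that $B_V$ is large enough relative to $B_A$ that the convolution sum $\sum_{q=0}^{\ell-1} B_A^{\ell-q}\Norm{\bV_q}{Y}$ is controlled by a sufficiently small multiple of $B_V^\ell$ to be reabsorbed into $C_V$; the remaining pieces (base case, geometric majorization, verification that the series solves the equation, and uniqueness) are routine power-series bookkeeping.
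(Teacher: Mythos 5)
Your proof is correct: the strong induction on the recursion $\bV_{\ell} = \bA_0^{-1}[\bR_{\ell} - \sum_{q=0}^{\ell-1}\bA_{\ell-q}\bV_q]$, the geometric-sum absorption using $B_V \geq 2B_A$ and $B_V \geq 4C_eC_AB_A$, and the Cauchy-product verification are exactly the standard argument behind this result, which the paper itself does not prove but imports from \cite{Nicholls16b}. The only point worth noting is that you (correctly) read Hypothesis~\ref{Thm:Anal:H1} as bounding $\Norm{\bR_{\ell}}{Z}$ rather than $\Norm{\bR_{\ell}}{Y}$ as literally typeset, which is needed for $\bA_0^{-1}$ to apply.
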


From \eqref{Eqn:DNO:0} it is easy to identify
\bes
\bA = \begin{pmatrix} I & -I + A X(x;\delta) \tau_w J_0 \\ 
  \tau_u G_0 & \tau_w J_0 - B X(x;\delta) \end{pmatrix},
\quad
\bV = \begin{pmatrix} U \\ W \end{pmatrix},
\quad 
\bR = \begin{pmatrix} \xi \\ -\tau_u \nu \end{pmatrix}.
\ees
All that remains is to find the forms \eqref{Eqn:AR:Exp}, and establish
Hypotheses~\ref{Thm:Anal:H1}, \ref{Thm:Anal:H2}, and 
\ref{Thm:Anal:H3}. As we shall shortly see, the analysis depends 
strongly upon the polarization (TE/TM) of our fields so we break our 
developments into these two cases. 

%
%

\subsection{Transverse Electric Polarization}
\label{Sec:Anal:TE}

In Transverse Electric polarization $A \equiv 0$ and $\tau_m = 1$, and we 
see that \eqref{Eqn:DNO:0} becomes
\be
\label{Eqn:TE}
\begin{pmatrix} I & -I \\ 
  G_0 & J_0 - B X(x;\delta) \end{pmatrix}
\begin{pmatrix} U \\ W \end{pmatrix}
= \begin{pmatrix} \xi \\ -\nu \end{pmatrix},
\ee
so that
\bes
\bA_0 = \begin{pmatrix} I & -I \\ 
  G_0 & J_0 - B X_0 \end{pmatrix};
\quad
\bA_1 = \begin{pmatrix} 0 & 0 \\ 
  0 & -B X_1(x) \end{pmatrix};
\quad
\bA_{\ell} \equiv \begin{pmatrix} 0 & 0 \\ 0 & 0 \end{pmatrix},
\quad
\ell \geq 2,
\ees
and
\bes
\bR_0 = \begin{pmatrix} \xi \\ -\nu \end{pmatrix};
\quad
\bR_{\ell} \equiv \begin{pmatrix} 0 \\ 0 \end{pmatrix},
\quad
\ell \geq 1.
\ees
As we shall see in the next Lemma, the natural spaces in which to
work for TE polarization are, for real $s \geq 0$,
\bes
Y = H^{s+1} \times H^{s+2},
\quad
Z = H^{s+1} \times H^s,
\ees
so that
\bes
\Norm{\underline{y}}{Y}^2
  = \SobNorm{\underline{y}_1}{s+1}^2
  + \SobNorm{\underline{y}_2}{s+2}^2,
\quad
\Norm{\underline{z}}{Z}^2
  = \SobNorm{\underline{z}_1}{s+1}^2
  + \SobNorm{\underline{z}_2}{s}^2.
\ees

\noindent \textbf{Hypothesis~\ref{Thm:Anal:H1}:}
With these definitions it is a simple matter to show that
\bes
\Norm{\bR_0}{Z}^2 = \SobNorm{\xi}{s+1}^2 
  + \SobNorm{\nu}{s}^2 < \infty,
\ees
given that
\bes
\xi = -e^{i \alpha x},
\quad
\nu = i \gamma_u e^{i \alpha x},
\ees
so that $\xi, \nu \in H^t$ for any real $t \geq 0$. Thus
Hypothesis~\ref{Thm:Anal:H1} is established with any choices of 
$C_R$ and $B_R$ such that $C_R B_R = \Norm{\bR_0}{Z}$.

\noindent \textbf{Hypothesis~\ref{Thm:Anal:H2}:} Considering generic
$U \in H^{s+1}$ and $W \in H^{s+2}$ we study
\begin{align*}
\Norm{\bA_0 \begin{pmatrix} U \\ W \end{pmatrix}}{Z}^2
  & = \SobNorm{U - W}{s+1}^2 
    + \SobNorm{G_0 U + J_0 W - B X_0 W}{s}^2 \\
  & \leq \SobNorm{U}{s+1}^2 + \SobNorm{W}{s+1}^2
    + C_G^2 \SobNorm{U}{s+1}^2
    + C_J^2 \SobNorm{W}{s+1}^2
\\ & \quad 
    + \Abs{i k_0 \hsigmaloc}^2 \Abs{X_0}^2 \SobNorm{W}{s}^2
    + \Abs{i k_0 \hsigmanloc}^2 \Abs{X_0}^2 \SobNorm{\px^2 W}{s}^2 \\
  & \leq C_0^2 \left( \SobNorm{U}{s+1}^2 
  + \SobNorm{W}{s+2}^2 \right) \\
  & = C_0^2 \Norm{\begin{pmatrix} U \\ W \end{pmatrix}}{Y}^2,
\end{align*}
where we have used Lemma~\ref{Lemma:G0J0},
and we have the desired mapping property of $\bA_0$. We turn to 
$\bA_1$ and find
\begin{align*}
\Norm{\bA_1 \begin{pmatrix} U \\ W \end{pmatrix}}{Z}^2
  & = \SobNorm{-B X_1(x) W}{s}^2 \\
  & \leq \Abs{i k_0 \hsigmaloc}^2 K^2 \HolderNorm{X_1}{s}^2 
    \SobNorm{W}{s}^2
    + \Abs{i k_0 \hsigmanloc}^2 K^2 \HolderNorm{X_1}{s}^2 
    \SobNorm{\px^2 W}{s}^2 \\
  & \leq C_1^2 \HolderNorm{X_1}{s}^2 \SobNorm{W}{s+2}^2 \\
  & \leq C_1^2 \HolderNorm{X_1}{s}^2
  \Norm{\begin{pmatrix} U \\ W \end{pmatrix}}{Y}^2,
\end{align*}
where we have used the Algebra property, Lemma~\ref{Lemma:Alg}, which
mandates integer $s \geq 0$. Thus, we are done with 
Hypothesis~\ref{Thm:Anal:H2} 
if we choose $C_A = \max \{ C_0, C_1 \}$ and 
$B_A = \HolderNorm{X_1}{s}$.

\noindent \textbf{Hypothesis~\ref{Thm:Anal:H3}:} The crux of the matter,
as always in regular perturbation theory, is the invertibility
of the linearized operator $\bA_0$ and its mapping properties.
For this we prove the following result.
\begin{Lemma}
\label{Lemma:TE}
Given real $s \geq 0$ if $Q \in H^{s+1}$ and $R \in H^s$, and
$X_0 \neq 0$ then there exists a unique solution of
\be
\label{Eqn:TE:Lin}
\begin{pmatrix} I & -I \\ G_0 & J_0 - B X_0 \end{pmatrix}
\begin{pmatrix} U \\ W \end{pmatrix}
= \begin{pmatrix} Q \\ R \end{pmatrix},
\ee
satisfying
\begin{align*}
\SobNorm{U}{s+1} 
  & \leq C_e \left\{ \SobNorm{Q}{s+1} + \SobNorm{R}{s} \right\}, \\
\SobNorm{W}{s+2} 
  & \leq C_e \left\{ \SobNorm{Q}{s+1} + \SobNorm{R}{s} \right\},
\end{align*}
for some constant $C_e > 0$.
\end{Lemma}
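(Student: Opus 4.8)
The plan is to solve the $2\times2$ system \eqref{Eqn:TE:Lin} by elimination, reducing it to a single scalar equation which is diagonalized by the Fourier transform with symbol exactly the determinant function $\DeltaTE_p$ of \eqref{Eqn:DeltaTE}. From the first row, $U = W + Q$. Substituting into the second row, and recalling from the Remark in Section~\ref{Sec:PattFlat} that $G_0$ and $J_0$ act on the $p$-th Fourier mode as $-i\gamma_{u,p}$ and $-i\gamma_{w,p}$ respectively, while (since $\Abs{N}=1$ when $g\equiv0$) $BX_0 = (ik_0)(\hsigmaloc - \hsigmanloc\px^2)X_0$ acts as multiplication by $(ik_0)(\hsigmaloc + \hsigmanloc\alpha_p^2)X_0$, the second equation becomes, mode by mode,
\[
\DeltaTE_p\,\hat W_p = \hat R_p + i\gamma_{u,p}\hat Q_p, \qquad p\in\Integer .
\]
Once we know $\DeltaTE_p\neq0$ for every $p$ --- which is precisely the content of the Lemma proved just above in the case $X_0>0$, and is guaranteed in general by the non-resonance hypothesis of Remark~\ref{Rk:NonRes} --- this determines $\hat W_p$, hence $W$, uniquely, and then $U = W+Q$.

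The crux, and the step I expect to be the main obstacle, is upgrading pointwise non-vanishing to the quantitative bound
\[
\Abs{\DeltaTE_p} \geq c\,\Angle{p}^2, \qquad p\in\Integer,
\]
for some $c>0$: this is what encodes the \textit{gain of two derivatives} produced by the nonlocal (second--order) term, and is the structural novelty relative to the local model of \cite{Nicholls19}. I would split into two regimes. For $\Abs{p}$ large the term $-ik_0\hsigmanloc X_0\alpha_p^2$ dominates, since $\gamma_{u,p},\gamma_{w,p}$ are $\BigOh{\Angle{p}}$ while $\alpha_p^2 \geq c_1\Angle{p}^2$ for $\Abs{p}\geq p_0$ and $k_0\hsigmanloc X_0\neq0$ for the physical parameters; in fact the computation in the preceding Lemma already exhibits the genuinely quadratic lower bound $\RealPart{i\DeltaTE_p} \geq k_0 X_0(\hsigmaloc' Q' - \hsigmaloc'' Q'')\alpha_p^2$. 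Thus $\Abs{\DeltaTE_p}\geq c_\infty\Angle{p}^2$ for all $\Abs{p}\geq p_0$. For the finitely many remaining modes, $\mu_0 := \min_{\Abs{p}<p_0}\Abs{\DeltaTE_p}>0$ and $\Angle{p}^2\leq\Angle{p_0}^2$ there, so $\Abs{\DeltaTE_p}\geq(\mu_0/\Angle{p_0}^2)\Angle{p}^2$; taking $c=\min\{c_\infty,\mu_0/\Angle{p_0}^2\}$ finishes the bound.

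The remaining estimates are then routine Fourier bookkeeping. From the displayed identity and $\Abs{\gamma_{u,p}}\leq C\Angle{p}$ one gets $\Angle{p}^{s+2}\Abs{\hat W_p} \leq c^{-1}\bigl(\Angle{p}^{s}\Abs{\hat R_p} + C\Angle{p}^{s+1}\Abs{\hat Q_p}\bigr)$, so squaring and summing over $p$ yields $\SobNorm{W}{s+2}^2 \leq (2/c^2)\bigl(\SobNorm{R}{s}^2 + C^2\SobNorm{Q}{s+1}^2\bigr)$, i.e. $\SobNorm{W}{s+2}\leq C_e(\SobNorm{Q}{s+1}+\SobNorm{R}{s})$. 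Finally $\SobNorm{U}{s+1}\leq\SobNorm{W}{s+1}+\SobNorm{Q}{s+1}\leq\SobNorm{W}{s+2}+\SobNorm{Q}{s+1}$ gives the companion estimate after enlarging $C_e$. Note that, unlike the treatment of $\bA_1$, no appeal to the Algebra Lemma (Lemma~\ref{Lemma:Alg}) is needed here, since $\bA_0$ only involves the \textit{constant} $X_0$; this is why the conclusion holds for all real $s\geq0$ rather than only integers.
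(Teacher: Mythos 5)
Your proposal is correct and follows essentially the same route as the paper: diagonalize by the Fourier transform, solve mode by mode (your elimination $U=W+Q$ reproduces exactly the paper's formula $\hat W_p = (i\gamma_{u,p}\hat Q_p+\hat R_p)/\DeltaTE_p$), and exploit $1/\DeltaTE_p=\BigOh{\Angle{p}^{-2}}$ to gain two derivatives on $W$. The only difference is that you explicitly justify the uniform lower bound $\Abs{\DeltaTE_p}\geq c\Angle{p}^2$ by splitting into large and small $\Abs{p}$, a step the paper simply asserts from nonresonance plus the asymptotics of the nonlocal term.
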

\begin{proof}
Upon expressing
\bes
U(x) = \sump \hat{U}_p e^{i \alpha_p x},
\quad
W(x) = \sump \hat{W}_p e^{i \alpha_p x},
\ees
we find that \eqref{Eqn:TE:Lin} demands
\bes
\begin{pmatrix} 1 & -1 \\ 
  -i \gamma_{u,p} & -i \gamma_{w,p} - \Bloc X_0 + \Bnloc X_0 \end{pmatrix}
\begin{pmatrix} \hat{U}_p \\ \hat{W}_p \end{pmatrix}
= \begin{pmatrix} \hat{Q}_p \\ \hat{R}_p \end{pmatrix}.
\ees
The exact solution is easily seen to be
\begin{align*}
\hat{U}_p 
  & = \frac{\{ i \gamma_{w,p} + (i k_0) \hsigmaloc X_0 
  + (i k_0) \hsigmanloc X_0 \alpha_p^2 \} \hat{Q}_p 
  + \hat{R}_p}{\DeltaTE_p},
\\
\hat{W}_p 
  & = \frac{i \gamma_{u,p} \hat{Q}_p + \hat{R}_p}{\DeltaTE_p}.
\end{align*}
Since we are ``nonresonant'' (see Remark~\ref{Rk:NonRes}) 
and, since $X_0 \neq 0$,
$1/\DeltaTE_p = \BigOh{\Angle{p}^{-2}}$ as $p \rightarrow \infty$
we find
\bes
\SobNorm{U}{s+1}^2 
  = \sump \Angle{p}^{2(s+1)} \Abs{\hat{U}_p}^2
  \leq \sump \Angle{p}^{2(s+1)} \left\{ C_Q \Abs{\hat{Q}_p}^2
    + C_R \Angle{p}^{-4} \Abs{\hat{R}_p}^2 \right\},
\ees
which delivers
\bes
\SobNorm{U}{s+1} \leq C_e \left\{ \SobNorm{Q}{s+1} + \SobNorm{R}{s-1}
  \right\}
  \leq C_e \left\{ \SobNorm{Q}{s+1} + \SobNorm{R}{s}
  \right\}.
\ees
In a similar manner, 
\bes
\SobNorm{W}{s+2}^2 
  = \sump \Angle{p}^{2(s+2)} \Abs{\hat{W}_p}^2
  \leq \sump \Angle{p}^{2(s+2)} \left\{ \Angle{p}^{-2} C_Q \Abs{\hat{Q}_p}^2
    + C_R \Angle{p}^{-4} \Abs{\hat{R}_p}^2 \right\},
\ees
which gives
\bes
\SobNorm{W}{s+2} \leq C_e \left\{ \SobNorm{Q}{s+1} + \SobNorm{R}{s}
  \right\}.
\ees
\end{proof}

Having established Hypotheses~\ref{Thm:Anal:H1}, \ref{Thm:Anal:H2},
and \ref{Thm:Anal:H3} we can invoke Theorem~\ref{Thm:Anal} to deduce.
\begin{Thm}
\label{Thm:Anal:TE}
Given an integer $s \geq 0$, if $X_0 \neq 0$ and $X_1 \in C^s([0,d])$
there exists
a unique solution pair, \eqref{Eqn:UW:Exp}, of the TE problem
\eqref{Eqn:TE} satisfying
\be
\label{Eqn:Anal:TE}
\SobNorm{U_{\ell}}{s+1} \leq C_U D^{\ell},
\quad
\SobNorm{W_{\ell}}{s+2} \leq C_W D^{\ell},
\quad
\forall\ \ell \geq 0,
\ee
for any $D > C \HolderNorm{X_1}{s}$ where $C_U$ and $C_W$ are
constants.
\end{Thm}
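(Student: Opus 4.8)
The plan is to apply the abstract analyticity theorem, Theorem~\ref{Thm:Anal}, to the system~\eqref{Eqn:TE} with the ingredients already assembled above. First I would record the expansions~\eqref{Eqn:AR:Exp}: the coefficients $\bA_0$, $\bA_1$ are as displayed, $\bA_\ell \equiv 0$ for $\ell \geq 2$, while $\bR_0 = (\xi, -\nu)^T$ and $\bR_\ell \equiv 0$ for $\ell \geq 1$; and I would fix the Banach spaces $Y = H^{s+1} \times H^{s+2}$, $Z = H^{s+1} \times H^s$ for the given integer $s \geq 0$. The three hypotheses have essentially been dispatched in the running text preceding the theorem: Hypothesis~\ref{Thm:Anal:H1} holds because $\xi = -e^{i\alpha x}$ and $\nu = i\gamma_u e^{i\alpha x}$ are smooth, so $\bR_0 \in Z$ with any $C_R, B_R$ obeying $C_R B_R = \Norm{\bR_0}{Z}$; Hypothesis~\ref{Thm:Anal:H2} follows from Lemma~\ref{Lemma:G0J0} (for $\bA_0$) and the algebra property Lemma~\ref{Lemma:Alg} (for the single nonzero coefficient $\bA_1$, which is where integer $s$ is needed), giving $\Norm{\bA_\ell}{Y\to Z} \leq C_A B_A^\ell$ with $C_A = \max\{C_0, C_1\}$ and $B_A = \HolderNorm{X_1}{s}$; and Hypothesis~\ref{Thm:Anal:H3} is exactly Lemma~\ref{Lemma:TE}, which supplies $\bA_0^{-1} : Z \to Y$ of norm $\leq C_e$, the required non--resonance being that of Remark~\ref{Rk:NonRes} (guaranteed, for $X_0 > 0$, by the earlier Lemma showing $\DeltaTE_p \neq 0$ together with the growth $\DeltaTE_p = \BigOh{\Angle{p}^2}$, which also underlies the $\BigOh{\Angle{p}^{-2}}$ gain used in Lemma~\ref{Lemma:TE}).

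With the hypotheses in place, Theorem~\ref{Thm:Anal} produces a unique $\bV(\delta) = \sumell \bV_\ell \delta^\ell$ solving~\eqref{Eqn:AVR}, hence the TE problem~\eqref{Eqn:TE}, with $\Norm{\bV_\ell}{Y} \leq C_V B_V^\ell$ for any $B_V \geq \max\{B_R, 2 B_A, 4 C_e C_A B_A\}$. Next I would check that the abstract recursion coincides with~\eqref{Eqn:MVR:0}: since $\bA_\ell \equiv 0$ for $\ell \geq 2$, the recursion $\bV_\ell = \bA_0^{-1}[\bR_\ell - \sum_{q=0}^{\ell-1} \bA_{\ell-q}\bV_q]$ collapses to $\bA_0 \bV_\ell = \bR_\ell - \bA_1 \bV_{\ell-1}$, and writing $\bV_\ell = (U_\ell, W_\ell)^T$ this is precisely~\eqref{Eqn:MVR:0} in the TE case ($A = 0$, $\tau_m = 1$), the term $-\bA_1 \bV_{\ell-1}$ reproducing the inhomogeneity $(0, B X_1 W_{\ell-1})^T$. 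Thus the $\{U_\ell, W_\ell\}$ defined by~\eqref{Eqn:UW:Exp}--\eqref{Eqn:MVR:0} are the components of $\bV_\ell$, and from $\Norm{\bV_\ell}{Y}^2 = \SobNorm{U_\ell}{s+1}^2 + \SobNorm{W_\ell}{s+2}^2$ I read off~\eqref{Eqn:Anal:TE} with $C_U = C_W = C_V$ and $D = B_V$. Finally, since only the product $C_R B_R$ is constrained, I may take $B_R \leq 2\HolderNorm{X_1}{s}$ (legitimate whenever $X_1 \not\equiv 0$, the case of interest), so that $B_V = \max\{2, 4 C_e C_A\}\HolderNorm{X_1}{s} =: C\HolderNorm{X_1}{s}$ is admissible; the estimate then holds with this $D$ and, monotonically, for every $D > C\HolderNorm{X_1}{s}$, which is the assertion. (Theorem~\ref{Thm:Anal} additionally yields convergence of~\eqref{Eqn:UW:Exp} for $\delta < \rho/B_V$, $0 \leq \rho < 1$.)

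At this point everything genuinely hard has already been done: the substance lies in Lemma~\ref{Lemma:TE}, where the nonlocal term contributes the factor $\alpha_p^2$ to $\DeltaTE_p$ so that $1/\DeltaTE_p = \BigOh{\Angle{p}^{-2}}$, and it is exactly this two--order smoothing that forces $W$ into $H^{s+2}$ while the data sits only in $H^s$, explaining the asymmetry between $Y$ and $Z$. Consequently the only points requiring care in the present proof are bookkeeping ones: confirming $B_A = \HolderNorm{X_1}{s}$ so that the radius of convergence scales correctly with the envelope perturbation, and matching the collapsed abstract recursion to~\eqref{Eqn:MVR:0}. I would expect no real obstacle beyond this.
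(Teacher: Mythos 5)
Your proposal is correct and follows exactly the route the paper takes: the paper's entire proof consists of invoking Theorem~\ref{Thm:Anal} after Hypotheses~\ref{Thm:Anal:H1}--\ref{Thm:Anal:H3} have been verified in the preceding text (via Lemmas~\ref{Lemma:G0J0}, \ref{Lemma:Alg}, and \ref{Lemma:TE}). Your additional bookkeeping --- matching the collapsed abstract recursion to \eqref{Eqn:MVR:0} and noting that $B_R$ can be absorbed so that $B_V$ scales with $\HolderNorm{X_1}{s}$ --- is a correct elaboration of details the paper leaves implicit.
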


%
%

\subsection{Transverse Magnetic Polarization}
\label{Sec:Anal:TM}

Meanwhile, in Transverse Magnetic polarization $B \equiv 0$ and
we see that \eqref{Eqn:DNO:0} becomes
\be
\label{Eqn:TM}
\begin{pmatrix} I & -I + A X \tau_w J_0\\ 
  \tau_u G_0 & \tau_w J_0 \end{pmatrix}
\begin{pmatrix} U \\ W \end{pmatrix}
= \begin{pmatrix} \xi \\ -\tau_u \nu \end{pmatrix},
\ee
so that
\bes
\bA_0 = \begin{pmatrix} I & -I + A X_0 \tau_w J_0 \\ 
  \tau_u G_0 & \tau_w J_0 \end{pmatrix};
\quad
\bA_1 = \begin{pmatrix} 0 & A X_1(x) \tau_w J_0 \\ 
  0 & 0 \end{pmatrix};
\quad
\bA_{\ell} \equiv \begin{pmatrix} 0 & 0 \\ 0 & 0 \end{pmatrix},
\quad
\ell \geq 2,
\ees
and,
\bes
\bR_0 = \begin{pmatrix} \xi \\ -\tau_u \nu \end{pmatrix};
\quad
\bR_{\ell} \equiv \begin{pmatrix} 0 \\ 0 \end{pmatrix},
\quad
\ell \geq 1.
\ees
It will become clear presently that the natural spaces for TM
polarization are, for real $s \geq 0$,
\bes
Y = H^{s+1} \times H^{s+3},
\quad
Z = H^s \times H^s,
\ees
so that
\bes
\Norm{\underline{y}}{Y}^2
  = \SobNorm{\underline{y}_1}{s+1}^2
  + \SobNorm{\underline{y}_2}{s+3}^2,
\quad
\Norm{\underline{z}}{Z}^2
  = \SobNorm{\underline{z}_1}{s}^2
  + \SobNorm{\underline{z}_2}{s}^2.
\ees

\noindent \textbf{Hypothesis~\ref{Thm:Anal:H1}:}
Akin to the TE case
\bes
\Norm{\bR_0}{Z}^2 = \SobNorm{\xi}{s}^2 
  + \SobNorm{\tau_u \nu}{s}^2 < \infty,
\ees
and Hypothesis~\ref{Thm:Anal:H1} is established with any choices of 
$C_R$ and $B_R$ such that $C_R B_R = \Norm{\bR_0}{Z}$.

\noindent \textbf{Hypothesis~\ref{Thm:Anal:H2}:} Once again, 
considering generic $U \in H^{s+1}$ and $W \in H^{s+3}$ we consider
\begin{align*}
\Norm{\bA_0 \begin{pmatrix} U \\ W \end{pmatrix}}{Z}^2
  & = \SobNorm{U - W + A X_0 \tau_w J_0 W}{s}^2 
    + \SobNorm{\tau_u G_0 U + \tau_w J_0 W}{s}^2 \\
  & \leq \SobNorm{U}{s}^2 + \SobNorm{W}{s}^2
    + \Abs{\tau_u}^2 C_G^2 \SobNorm{U}{s+1}^2 
\\ & \quad 
    + \left\{ \Abs{\frac{\hsigmaloc}{i k_0}}^2 \Abs{X_0}^2 + 1 \right\}
      \Abs{\tau_w}^2 C_J^2 \SobNorm{W}{s+1}^2
    + \left\{ \Abs{\frac{\hsigmanloc}{i k_0}}^2 \Abs{X_0}^2 \right\}
      \Abs{\tau_w}^2 C_J^2 \SobNorm{\px^2 W}{s+1}^2\\
  & \leq C_0^2 \left( \SobNorm{U}{s+1}^2 
  + \SobNorm{W}{s+3}^2 \right) \\
  & = C_0^2 \Norm{\begin{pmatrix} U \\ W \end{pmatrix}}{Y}^2,
\end{align*}
again using Lemma~\ref{Lemma:G0J0}, and we have the required mapping
property of $\bA_0$. We now consider $\bA_1$
\begin{align*}
\Norm{\bA_1 \begin{pmatrix} U \\ W \end{pmatrix}}{Z}^2
  & = \SobNorm{A X_1(x) \tau_w J_0 W}{s}^2 \\
  & \leq \Abs{\frac{\hsigmaloc}{i k_0}}^2 K^2 \HolderNorm{X_1}{s}^2 
    \Abs{\tau_w}^2 \SobNorm{W}{s+1}^2
    + \Abs{\frac{\hsigmanloc}{i k_0}}^2 K^2 \HolderNorm{X_1}{s}^2 
    \Abs{\tau_w}^2 \SobNorm{\px^2 W}{s+1}^2 \\
  & \leq C_1^2 \HolderNorm{X_1}{s}^2 \SobNorm{W}{s+3}^2 \\
  & \leq C_1^2 \HolderNorm{X_1}{s}^2
  \Norm{\begin{pmatrix} U \\ W \end{pmatrix}}{Y}^2,
\end{align*}
where we have used Lemma~\ref{Lemma:Alg}. Thus, we are done with 
Hypothesis~\ref{Thm:Anal:H2} if we choose $C_A = \max \{ C_0, C_1 \}$
and $B_A = \HolderNorm{X_1}{s}$.

\noindent \textbf{Hypothesis~\ref{Thm:Anal:H3}:} We now study
the invertibility of the operator $\bA_0$.
\begin{Lemma}
\label{Lemma:TM}
Given real $s \geq 0$ if $Q \in H^s$, $R \in H^s$, and $X_0 \neq 0$
then there exists a unique solution of
\be
\label{Eqn:TM:Lin}
\begin{pmatrix} I & -I + A X_0 \tau_w J_0 \\ 
  \tau_u G_0 & \tau_w J_0 \end{pmatrix}
\begin{pmatrix} U \\ W \end{pmatrix}
= \begin{pmatrix} Q \\ R \end{pmatrix},
\ee
satisfying
\begin{align*}
\SobNorm{U}{s+1} 
  & \leq C_e \left\{ \SobNorm{Q}{s} + \SobNorm{R}{s} \right\}, \\
\SobNorm{W}{s+3} 
  & \leq C_e \left\{ \SobNorm{Q}{s} + \SobNorm{R}{s} \right\},
\end{align*}
for some constant $C_e > 0$.
\end{Lemma}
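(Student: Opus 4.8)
The plan is to mirror the proof of Lemma~\ref{Lemma:TE}: diagonalize \eqref{Eqn:TM:Lin} with the Fourier transform, solve the resulting family of $2\times 2$ systems by Cramer's rule, and estimate the Sobolev norms mode--by--mode. First I would write $U(x) = \sump \hat U_p e^{i\alpha_p x}$, $W(x) = \sump \hat W_p e^{i\alpha_p x}$, and similarly for $Q$ and $R$. Recalling that $G_0$, $J_0$ act as the Fourier multipliers $-i\gamma_{u,p}$, $-i\gamma_{w,p}$, and that on the mode $e^{i\alpha_p x}$ the operator $A X_0$ reduces to multiplication by $(\hsigmaloc + \hsigmanloc\alpha_p^2)X_0/(ik_0)$ (in TM we have $\Abs{N}=1$ since $g\equiv 0$, and $\px^2 \mapsto -\alpha_p^2$), the $p$th system becomes
\[
\begin{pmatrix} 1 & -1 + A X_0 \tau_w(-i\gamma_{w,p}) \\ \tau_u(-i\gamma_{u,p}) & \tau_w(-i\gamma_{w,p}) \end{pmatrix}
\begin{pmatrix}\hat U_p \\ \hat W_p\end{pmatrix}
= \begin{pmatrix}\hat Q_p \\ \hat R_p\end{pmatrix},
\]
whose determinant is precisely $\DeltaTM_p$ of \eqref{Eqn:DeltaTM}. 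Cramer's rule then expresses $\hat U_p$ and $\hat W_p$ as explicit ratios with denominator $\DeltaTM_p$, and uniqueness is immediate from $\DeltaTM_p\neq 0$ for every $p$.

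The heart of the argument --- and the point that genuinely differs from the TE case --- is the decay rate of $1/\DeltaTM_p$. By the non--resonance hypothesis of Remark~\ref{Rk:NonRes} (consistent with the Lemma above since $X_0\neq 0$), $\Abs{1/\DeltaTM_p}$ is bounded uniformly in $p$. For $\Abs{p}$ large the modes are evanescent in both layers, so $\gamma_{m,p} = i\gamma_{m,p}''$ with $\gamma_{m,p}'' = \sqrt{\alpha_p^2 - k_m^2}$, and the final term of \eqref{Eqn:DeltaTM} is $\tau_u\tau_w(\hsigmanloc/(ik_0))X_0\gamma_{u,p}''\gamma_{w,p}''\alpha_p^2$, whose modulus is comparable to $\Abs{\tau_u\tau_w\hsigmanloc X_0/k_0}\,\alpha_p^4$. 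Since $\hsigmanloc\neq 0$ and $X_0\neq 0$, this term dominates all the lower--order contributions (which are $\BigOh{\Angle{p}^2}$ or smaller) once $\Abs{p}$ is sufficiently large, giving $\Abs{\DeltaTM_p}\geq c\Angle{p}^4$ there; combining this with the uniform lower bound from non--resonance on the finitely many remaining modes (including any Rayleigh--singular ones, where some $\gamma_{m,p}=0$ and the leading term degenerates) yields $1/\DeltaTM_p = \BigOh{\Angle{p}^{-4}}$.

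Finally I would read off the estimates. The numerator of $\hat W_p$ is $\hat R_p - \tau_u(-i\gamma_{u,p})\hat Q_p$, which is $\BigOh{\Abs{\hat R_p} + \Angle{p}\Abs{\hat Q_p}}$, so $\Abs{\hat W_p}\leq C(\Angle{p}^{-4}\Abs{\hat R_p} + \Angle{p}^{-3}\Abs{\hat Q_p})$ and hence
\[
\SobNorm{W}{s+3}^2 = \sump\Angle{p}^{2(s+3)}\Abs{\hat W_p}^2 \leq C\sump\left(\Angle{p}^{2(s-1)}\Abs{\hat R_p}^2 + \Angle{p}^{2s}\Abs{\hat Q_p}^2\right) \leq C\left(\SobNorm{R}{s}^2 + \SobNorm{Q}{s}^2\right).
\]
For $U$ the numerator is $\tau_w(-i\gamma_{w,p})\hat Q_p + (1 - A X_0\tau_w(-i\gamma_{w,p}))\hat R_p$; the entry $A X_0\tau_w(-i\gamma_{w,p}) = \BigOh{\Angle{p}^3}$ is the dominant one, so $\Abs{\hat U_p}\leq C(\Angle{p}^{-3}\Abs{\hat Q_p} + \Angle{p}^{-1}\Abs{\hat R_p})$ and therefore $\SobNorm{U}{s+1}\leq C_e(\SobNorm{Q}{s} + \SobNorm{R}{s})$, as claimed; it is exactly this $\Angle{p}^3$ entry that forces the target space $H^{s+3}$ for $W$, one more derivative than in the TE case. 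I expect the main obstacle to be the bookkeeping of the second paragraph: showing that $1/\DeltaTM_p$ genuinely gains four powers of $\Angle{p}$ rather than the naive two, which is precisely the place where the nonlocal correction $\hsigmanloc\px^2$ enters and must be shown not to vanish.
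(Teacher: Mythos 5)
Your proposal is correct and follows essentially the same route as the paper: Fourier diagonalization, Cramer's rule on the resulting $2\times 2$ systems with determinant $\DeltaTM_p$, the decay $1/\DeltaTM_p = \BigOh{\Angle{p}^{-4}}$ driven by the nonlocal term $\hsigmanloc\alpha_p^2\gamma_{u,p}\gamma_{w,p}$, and the mode--by--mode Sobolev estimates with exactly the numerator growth rates the paper uses. The only difference is that you spell out why the inverse determinant gains four powers of $\Angle{p}$ (large--$p$ evanescence plus $\hsigmanloc, X_0 \neq 0$), a step the paper simply asserts.
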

\begin{proof}
With
\bes
U(x) = \sump \hat{U}_p e^{i \alpha_p x},
\quad
W(x) = \sump \hat{W}_p e^{i \alpha_p x},
\ees
we find that \eqref{Eqn:TM:Lin} requires
\bes
\begin{pmatrix} 1 & -1 - A X_0 \tau_w i \gamma_{w,p} \\ 
  -\tau_u i \gamma_{u,p} & -\tau_w i \gamma_{w,p} \end{pmatrix}
\begin{pmatrix} \hat{U}_p \\ \hat{W}_p \end{pmatrix}
= \begin{pmatrix} \hat{Q}_p \\ \hat{R}_p \end{pmatrix}.
\ees
The exact solution is easily seen to be
\begin{align*}
\hat{U}_p 
  & = \frac{-\tau_w i \gamma_{w,p} \hat{Q}_p 
    + \left\{ 1 + \frac{X_0}{i k_0} (\hsigmaloc
    + \hsigmanloc \alpha_p^2) \right\} 
      \tau_w i \gamma_{w,p} \hat{R}_p}{\DeltaTM_p}, \\
\hat{W}_p 
  & = \frac{\tau_u i \gamma_{u,p} \hat{Q}_p + \hat{R}_p}{\DeltaTM_p}.
\end{align*}
Once again, as we are ``nonresonant'' (Remark~\ref{Rk:NonRes}) 
and, since $X_0 \neq 0$,
$1/\DeltaTM_p = \BigOh{\Angle{p}^{-4}}$ as $p \rightarrow \infty$
we find
\bes
\SobNorm{U}{s+1}^2 
  = \sump \Angle{p}^{2(s+1)} \Abs{\hat{U}_p}^2
  \leq \sump \Angle{p}^{2(s+1)} 
    \left\{ C_Q \Angle{p}^{-6} \Abs{\hat{Q}_p}^2
    + C_R \Angle{p}^{-2} \Abs{\hat{R}_p}^2 \right\},
\ees
which gives
\bes
\SobNorm{U}{s+1} \leq C_e \left\{ \SobNorm{Q}{s-2} + \SobNorm{R}{s}
  \right\}
  \leq C_e \left\{ \SobNorm{Q}{s} + \SobNorm{R}{s}
  \right\}.
\ees
Similarly,
\bes
\SobNorm{W}{s+3}^2 
  = \sump \Angle{p}^{2(s+3)} \Abs{\hat{W}_p}^2
  \leq \sump \Angle{p}^{2(s+3)} \left\{ C_Q \Angle{p}^{-6} \Abs{\hat{Q}_p}^2
    + C_R \Angle{p}^{-8} \Abs{\hat{R}_p}^2 \right\},
\ees
which delivers
\bes
\SobNorm{W}{s+3} \leq C_e \left\{ \SobNorm{Q}{s} + \SobNorm{R}{s-1}
  \right\}
  \leq C_e \left\{ \SobNorm{Q}{s} + \SobNorm{R}{s}
  \right\}.
\ees
\end{proof}

Having established Hypotheses~\ref{Thm:Anal:H1}, \ref{Thm:Anal:H2},
and \ref{Thm:Anal:H3} we can invoke Theorem~\ref{Thm:Anal} to deduce
the desired result.
\begin{Thm}
\label{Thm:Anal:TM}
Given an integer $s \geq 0$, if $X_0 \neq 0$ and $X_1 \in C^s([0,d])$
there exists
a unique solution pair, \eqref{Eqn:UW:Exp}, of the TM problem
\eqref{Eqn:TM} satisfying
\be
\label{Eqn:Anal:TM}
\SobNorm{U_{\ell}}{s+1} \leq C_U D^{\ell},
\quad
\SobNorm{W_{\ell}}{s+3} \leq C_W D^{\ell},
\quad
\forall\ \ell \geq 0,
\ee
for any $D > C \HolderNorm{X_1}{s}$ where $C_U$ and $C_W$ are
constants.
\end{Thm}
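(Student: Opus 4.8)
The plan is to do nothing more than assemble the three ingredients that were established in the preceding paragraphs and feed them into the abstract analyticity result, Theorem~\ref{Thm:Anal}, with the Banach spaces $Y = H^{s+1} \times H^{s+3}$ and $Z = H^s \times H^s$. First I would record that Hypothesis~\ref{Thm:Anal:H1} holds: the data $\bR_0 = (\xi, -\tau_u \nu)^T$ is built from the plane--wave traces $\xi = -e^{i \alpha x}$ and $\nu = i \gamma_u e^{i \alpha x}$, both of which lie in $H^t$ for every real $t \geq 0$, so that $\Norm{\bR_0}{Z} < \infty$ and $\bR_\ell \equiv 0$ for $\ell \geq 1$; any $C_R, B_R$ with $C_R B_R = \Norm{\bR_0}{Z}$ will do, and $B_R$ may be taken as small as we please at the cost of enlarging $C_R$. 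Next, Hypothesis~\ref{Thm:Anal:H2} is precisely the pair of mapping estimates already carried out: $\bA_0 : Y \to Z$ is bounded using Lemma~\ref{Lemma:G0J0} (the extra two derivatives that $W$ carries in $Y$ absorb both $J_0$ and the second--derivative factor $\px^2$ coming from $\hsigmanloc$), while $\bA_1 : Y \to Z$ is bounded with norm controlled by $\HolderNorm{X_1}{s}$ using Lemma~\ref{Lemma:G0J0} together with the algebra property Lemma~\ref{Lemma:Alg} (which is where integrality of $s$ enters), and $\bA_\ell \equiv 0$ for $\ell \geq 2$; hence one takes $C_A = \max\{C_0, C_1\}$ and $B_A = \HolderNorm{X_1}{s}$.

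The one nontrivial input is Hypothesis~\ref{Thm:Anal:H3}, which is exactly Lemma~\ref{Lemma:TM}: because $X_0 \neq 0$ the non--resonance condition of Remark~\ref{Rk:NonRes} forces $1/\DeltaTM_p = \BigOh{\Angle{p}^{-4}}$, and it is this quartic decay --- not mere boundedness --- that makes $\bA_0^{-1} : Z \to Y$ bounded, $\Norm{\bA_0^{-1}}{Z \to Y} \leq C_e$, and that is what allows the $W$--component to gain three full derivatives, explaining the asymmetric choice $Y = H^{s+1} \times H^{s+3}$. With \ref{Thm:Anal:H1}, \ref{Thm:Anal:H2}, and \ref{Thm:Anal:H3} in hand, Theorem~\ref{Thm:Anal} applies verbatim: the series \eqref{Eqn:UW:Exp} exists, is the unique such power--series solution of \eqref{Eqn:TM}, and obeys $\Norm{\bV_\ell}{Y} \leq C_V B_V^\ell$ for any $C_V \geq 2 C_e C_R$ and $B_V \geq \max\{B_R, 2 B_A, 4 C_e C_A B_A\}$.

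It then only remains to unpack the $Y$--norm. Since $\Norm{\bV_\ell}{Y}^2 = \SobNorm{U_\ell}{s+1}^2 + \SobNorm{W_\ell}{s+3}^2$, the estimate $\Norm{\bV_\ell}{Y} \leq C_V B_V^\ell$ immediately gives $\SobNorm{U_\ell}{s+1} \leq C_V B_V^\ell$ and $\SobNorm{W_\ell}{s+3} \leq C_V B_V^\ell$, which is \eqref{Eqn:Anal:TM} with $C_U = C_W = C_V$ and $D = B_V$. Because $2 B_A$ and $4 C_e C_A B_A$ are fixed multiples of $\HolderNorm{X_1}{s}$ while $B_R$ can be pushed below any prescribed threshold, for any $D > C \HolderNorm{X_1}{s}$ with $C = \max\{2, 4 C_e C_A\}$ one may choose $(C_R, B_R)$ so that $B_V \leq D$, and then \eqref{Eqn:Anal:TM} holds with the corresponding $C_U = C_W = C_V$; this last rebalancing of $(C_R, B_R)$ is the only bookkeeping subtlety. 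There is, honestly, no serious obstacle left at this point --- all of the genuine analysis lives in Lemmas~\ref{Lemma:G0J0}, \ref{Lemma:Alg}, and above all \ref{Lemma:TM} (the quartic decay of $1/\DeltaTM_p$, which is itself underpinned by the earlier computation that $\DeltaTM_p \neq 0$) --- so the ``hard part'' is merely to present the verification of \ref{Thm:Anal:H1}--\ref{Thm:Anal:H3} cleanly and then cite Theorem~\ref{Thm:Anal}.
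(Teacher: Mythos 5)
Your proposal is correct and follows essentially the same route as the paper: verify Hypotheses~\ref{Thm:Anal:H1}--\ref{Thm:Anal:H3} with $Y = H^{s+1}\times H^{s+3}$, $Z = H^s\times H^s$ (the key input being Lemma~\ref{Lemma:TM} and the $\BigOh{\Angle{p}^{-4}}$ decay of $1/\DeltaTM_p$), and then invoke Theorem~\ref{Thm:Anal}. Your extra remark about rebalancing $(C_R, B_R)$ to push $B_V$ below any $D > C\HolderNorm{X_1}{s}$ is a small bookkeeping point the paper leaves implicit, but it is consistent with the intended argument.
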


%
%

\section{Numerical Results}
\label{Sec:NumRes}

We now discuss how the recursions outlined above can be implemented
in a HOS scheme for simulating the surface
scattered fields $\{ U, W \}$. After describing the implementation
we use our algorithm to simulate absorbance spectra of TM polarized
plane waves incident upon a periodic grid of graphene ribbons as
described in \cite{GDBP16}.

%
%

\subsection{Implementation}
\label{Sec:Impl}

A numerical implementation of our recursions is rather straightforward.
To begin, we must truncate the HOPE expansions \eqref{Eqn:UW:Exp} after
a finite number, $L$, of Taylor orders
\bes
\{ U, W \} \approx \{ U^L, W^L \}
  := \sum_{\ell=0}^{L} \{ U_{\ell}, W_{\ell} \}(x) \delta^{\ell},
\ees
which satisfy, in either TE or TM polarization, \eqref{Eqn:MVR:0}
up to perturbation order $L$. For this, in consideration of the
quasiperiodic boundary conditions and our HOS
philosophy \cite{GottliebOrszag77,ShenTang06,ShenTangWang11},
we utilize the finite Fourier representations
\bes
\{ U_{\ell}, W_{\ell} \} \approx \{ U_{\ell}^{N_x}, W_{\ell}^{N_x} \}
  := \sum_{p=-N_x/2}^{N_x/2-1} \{ \hat{U}_{\ell,p}, \hat{W}_{\ell,p} \}
    e^{i \alpha_p x},
\quad
0 \leq \ell \leq L,
\ees
delivering
\be
\label{Eqn:UW:NNx}
\{ U, W \} \approx \{ U^{L,N_x}, W^{L,N_x} \}
  = \sum_{\ell=0}^{L} \sum_{p=-N_x/2}^{N_x/2-1}
    \{ \hat{U}_{\ell,p}, \hat{W}_{\ell,p} \} e^{i \alpha_p x},
\ee
and, with a collocation approach, we simply demand that \eqref{Eqn:MVR:0}
be true at the equally--spaced gridpoints $x_j = (d/N_x) j$, 
$0 \leq j \leq N_x-1$.

Due to the fact that the operators $\{ G_0, J_0 \}$ are Fourier
multipliers, they can be readily applied in Fourier space after
a Discrete Fourier Transform (DFT) which we accelerate by the 
Fast Fourier Transform (FFT) algorithm. Finally, we evaluate 
multiplication by the function
$X_1(x)$ on the physical side, pointwise at the equally--spaced
gridpoints $x_j$.

As with all perturbation schemes it is important to specify
how the Taylor series in \eqref{Eqn:UW:NNx} are to be summed.
On the one hand, ``direct'' Taylor summation seems natural,
however, this method is limited to the \textit{disk} of analyticity
centered at the origin. However, it has been our experience that
the actual domain of analyticity is much larger and may include
the entire real axis (despite poles on the imaginary axis and
elsewhere in the complex plane far from the real axis)
\cite{NichollsReitich00b}. One way
to access this extended region of analyticity is the classical
technique of Pad\'e approximation \cite{BakerGravesMorris96}
which has been used successfully for enhancing HOPS schemes
in the past \cite{NichollsReitich99,NichollsReitich00b,NichollsReitich03b}.
Pad\'e approximation seeks to estimate the truncated Taylor series
$f(\delta) = \sum_{\ell=0}^{L} f_{\ell} \delta^{\ell}$ 
by the rational function
\bes
\left[ \frac{M}{N} \right](\delta) := \frac{a^M(\delta)}{b^N(\delta)}
  = \frac{\sum_{m=0}^{M} a_m \delta^m}{\sum_{n=0}^{N} b_n \delta^n},
\quad
M + N = L,
\ees
and
\bes
\left[ \frac{M}{N} \right](\delta) = f(\delta) + \BigOh{\delta^{M+N+1}};
\ees
well--known formulas for the coefficients $\{ a_m, b_n \}$ can be
found in \cite{BakerGravesMorris96}. These Pad\'e approximants have
stunning properties of enhanced convergence, and we point the
interested reader to \S~2.2 of \cite{BakerGravesMorris96} and the
calculations in \S~8.3 of \cite{BenderOrszag78} for a complete
discussion.

%
%

\subsection{Absorbance Spectra}
\label{Sec:AbsSpec}

With an implementation of our algorithm we can now address questions
of importance to practitioners. As a specific example, we consider the
work of Goncalves, Dias, Bludov, and Peres \cite{GDBP16} who studied the
scattering of linear waves by arrays of graphene ribbons mounted
between dielectric layers. More specifically we refer the reader to
Figure~4 of \cite{GDBP16} which shows the results of their investigations
into the effect of the ribbon period on the frequency of a Graphene
Surface Plasmon (GSP) excited by the configuration.

To generate this figure \cite{GDBP16} focused upon TM
polarization, set the physical parameters
\be
\label{Eqn:GDBP}
\epsilon_u = 3,
\quad
\epsilon_w = 4,
\quad
E_F = 0.4~\text{eV},
\quad
\Gamma = 3.7~\text{meV},
\ee
and studied normal incidence so that $\theta=\alpha=0$. The lateral period 
(which they denoted $L$) of the structure was varied among 
$d = 1, 2, 4, 8$ (in microns) while the width of the graphene in each
period cell was set to $d/2$.

In the study of diffraction gratings, quantities of great physical interest
are the efficiencies. Recalling the Rayleigh expansions, \eqref{Eqn:Ray:u}
and \eqref{Eqn:Ray:w}, and the definitions, \eqref{Eqn:AlphaGamma}, these
are given by
\bes
e_{u,p} := \frac{\gamma_{u,p} \Abs{\Hat{U}_p}^2}{\gamma_{u,0}},
\quad
e_{w,p} := \frac{\gamma_{w,p} \Abs{\Hat{W}_p}^2}{\gamma_{u,0}}.
\ees
With these we can define the reflectance, transmittance, and absorbance
respectively as
\bes
R := \sum_{p \in \cU_u} e_{u,p},
\quad
T := \sum_{p \in \cU_w} e_{w,p},
\quad
A := 1 - R - \frac{\epsilon_u}{\epsilon_w} T;
\ees
we note that all--dielectric structures possess a principle of conservation
of energy which mandates $A=0$. However, as graphene has noteworthy metallic 
properties,
an indicator of a plasmonic response is given by a significant deviation
of $A$ from zero. Figure~4 of \cite{GDBP16} is a plot of precisely this
quantity, versus a range of illumination frequencies, for the four values
of $d$ mentioned above. In particular, we note significant peaks in
$A$, the ``absorbance spectra,'' of magnitude 0.35 in the vicinities
of $\nu = 2, 4, 6, 8$~THz for the values $d = 8, 4, 2, 1$ microns,
respectively. In subsequent work by Fallali, Low, Tamagnone, and 
Perruisseau--Carrier \cite{FLowTP15}, this study was extended
(with slightly different parameters) to include the nonlocal effects
produced by the model we describe above.

With an implementation of our new recursions we attempted to recreate
this work with both the local and nonlocal models. Our results, with 
the same physical parameters, \eqref{Eqn:GDBP}, supplemented with
\be
\label{Eqn:GDBP:NL}
v_F = 1~\text{$\mu$m/s},
\quad
\tau = 0.09~\text{s},
\ee
and numerical values $N_x = 128$ and $L = 16$, are displayed in 
Figure~\ref{Fig:Absorbance:HOPE} for $\delta=1$. It is noteworthy
that Pad\'e approximation was \textit{required} to achieve these results
as Taylor summation diverged. We point out the remarkable
\textit{qualitative} agreement between the local results (dashed curves)
and those of \cite{GDBP16}, which we take as evidence for the accuracy 
and utility of our approach. In addition,
we point out the \textit{shifted} solid curves generated by the nonlocal
model, in particular the blueshift of the peaks to higher frequencies
$\nu$ as also noted in \cite{FLowTP15}.
%
%
%
%
%
%
\begin{figure}[hbt]
\begin{center}
  \includegraphics[width=0.8\textwidth]{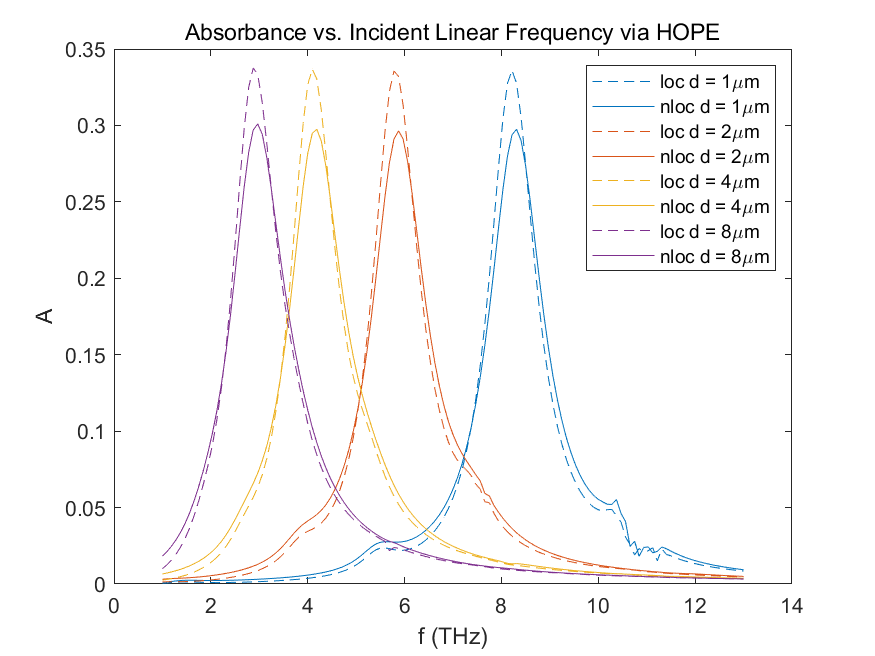}
  \caption{Plot of HOPE simulation of the absorbance spectra for 
    normally incident plane--wave illumination of a periodic array 
    of graphene ribbons with periodicity $d$ mounted between two 
    dielectrics. The physical parameters are specified in 
    \eqref{Eqn:GDBP} and \eqref{Eqn:GDBP:NL}, and the numerical
    parameters were $N_x=128$
    and $L=16$.}
  \label{Fig:Absorbance:HOPE}
\end{center}
\end{figure}

Of course it is always useful to have additional validation, and
for this we pondered the question of simply approximating the
governing equations \eqref{Eqn:DNO:0}
with $\delta=1$ using a collocation approach
\cite{GottliebOrszag77,ShenTang06,ShenTangWang11}: Expand the
\bes
\{ U, W \} \approx \{ U^{N_x}, W^{N_x} \}
  = \sum_{p=-N_x/2}^{N_x/2-1} \{ \hat{U}_p, \hat{W}_p \} e^{i \alpha_p x},
\ees
and demand that \eqref{Eqn:DNO:0} be true at the gridpoints
$x_j = (d/N_x) j$, $0 \leq j \leq N_x-1$. We implemented this algorithm
and achieved the results displayed in 
Figure~\ref{Fig:Absorbance:Collocation}.
Interestingly, the difference between these collocation results and
our HOPE computations is largely negligible. (The ``wiggles'' seen
in the high--frequency regime of Figure~\ref{Fig:Absorbance:HOPE}
we attribute to the somewhat unstable nature of Pad\'e summation
which is sometimes observed \cite{BenderOrszag78}.)
Importantly, with non--optimized MATLAB\texttrademark \cite{MATLAB}
implementations of each
algorithm, our new HOPE approach is nearly ten times faster than
the collocation approach. For this reason we find our new algorithm
to be quite compelling, though we intend to study this issue in
a variety of settings in a forthcoming publication.
%
%
%
%
%
%
\begin{figure}[hbt]
\begin{center}
  \includegraphics[width=0.8\textwidth]{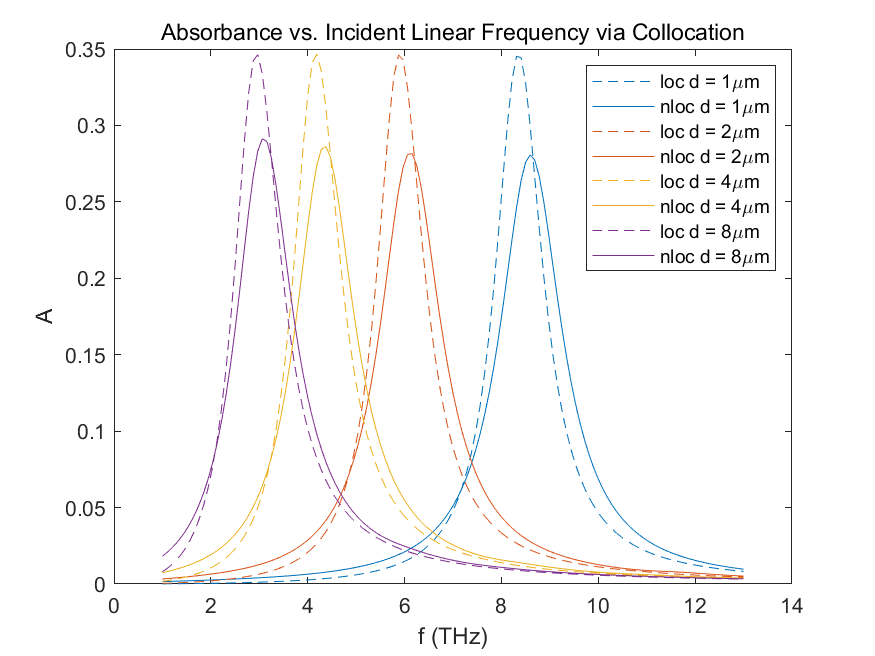}
  \caption{Plot of collocation simulation of the absorbance 
  spectra for 
    normally incident plane--wave illumination of a periodic array 
    of graphene ribbons with periodicity $d$ mounted between two 
    dielectrics. The physical parameters are specified in 
    \eqref{Eqn:GDBP} and \eqref{Eqn:GDBP:NL}, and the numerical
    parameter was $N_x=128$.}
  \label{Fig:Absorbance:Collocation}
\end{center}
\end{figure}
%

%
%

\section*{Acknowledgments}

D.P.N.\ gratefully acknowledges support from the 
National Science Foundation through grant No.~DMS--2111283.

%
%

\bibliography{nicholls}

\end{document}